\documentclass{article}
\usepackage{authblk}

\title{Weak convergence of quasi-birth-and-death processes with rational arrival process components to fluid queues}
\author[a]{Nigel Bean}
\author[a]{Angus Lewis\thanks{Corresponding author: angus.lewis@adelaide.edu.au}}
\affil[a]{The University of Adelaide}
\date{July 28, 2026}

\usepackage{amssymb,amsmath,amsthm}
\usepackage[mathscr]{euscript}
\usepackage{graphicx}
\usepackage{multirow}
\usepackage{tikz}
\usepackage{enumitem}

\usepackage[a4paper, total={6in, 8in}]{geometry}

\newtheorem{theorem}{Theorem}[section]
\newtheorem{corollary}[theorem]{Corollary}
\newtheorem{lemma}[theorem]{Lemma}

\newtheorem{remark}[theorem]{Remark}

\newcounter{subassumption}[theorem]

\makeatletter
\renewcommand{\p@subassumption}{\theasu}
\makeatother

\newcounter{subproperty}[theorem]

\makeatletter
\renewcommand{\p@subproperty}{\theproperty}
\makeatother

\newcommand{\wrt}{\, \mathrm{d}} 
 
\newcommand{\var}{\mathrm{Var}}
\newcommand{\calD}{\mathcal{D}}

\newcommand{\bs}{\boldsymbol}

\newcommand{\vligne}[1]{\begin{bmatrix} #1 \end{bmatrix}}

\newcommand{\tr}{^{\mbox{\tiny T}}}
\newcommand{\rapphase}{\phi}

\newcommand{\sign}{\mathrm{sgn}}

\begin{document}
\maketitle{}

\begin{abstract}
    \label{sec:abstract}
    In this paper we construct a new approximation to a fluid queue as a quasi-birth-and-death process with rational arrival process components (QBD-RAP) and prove its convergence. Fluid queues are stochastic processes that move linearly at a rate governed by the state of a continuous-time Markov chain (CTMC), and are widely used to model telecommunications, power, risk, and storage systems. A key motivating application is to \emph{fluid-fluid queues}, whose analysis proceeds via operator-analytic expressions involving the generator of the underlying fluid queue; these expressions are differential operators that are not, in general, readily computable, so approximation is needed. Existing approximations with a probabilistic interpretation guarantee valid probabilities but require a fine discretisation to be accurate, while methods such as the Discontinuous Galerkin approach are more accurate at a given discretisation level but can produce negative mass or probabilities exceeding one. Our (QBD-RAP) approximation addresses this. Because the QBD-RAP is itself a stochastic process, the approximation it produces automatically retains the defining properties of a probability, while promising improved numerical accuracy over existing probabilistic schemes for a given discretisation level. We prove that the generator of the QBD-RAP converges to the generator of the fluid queue, which is, to our knowledge, the first generator-theoretic convergence result for a process with rational arrival process components. The proof introduces a new technique for the analysis of RAP-modulated processes, analysing the generator via bases of conditional residual time distributions rather than the orbit process used in prior RAP analyses, and along the way establishes that the phase process of the QBD-RAP and of the fluid queue share the same distribution.

    Keywords: fluid queue, quasi-birth-and-death process, rational arrival process, weak convergence.
\end{abstract}
\section{Introduction}
\label{sec:Introduction}
A fluid queue is a stochastic process, \(\{\bs X(t)\}_{t\geq0} = \{(X(t), \varphi(t))\}_{t\geq0}\), where \(\{\varphi(t)\}_{t\geq0}\) is a continuous-time Markov chain (CTMC) with finite state space \(\mathcal S\), and \(\{X(t)\}_{t\geq0}\) is the level process. Each state \(i\in\mathcal S\) is associated with a rate \(c_i\in\mathbb R\). When \(\varphi(t)=i\) the level process moves linearly (assuming \(X(t)\) is not at a boundary) with rate \(\cfrac{\wrt}{\wrt t}X(t)=c_i\). When modelling systems with fluid queues, the phase process typically represents some underlying operating state or environment of the system and \(X(t)\) is typically the main variable of interest.

Fluid queues are attractive modelling tools precisely because they are flexible without sacrificing tractability: \(\mathcal S\) can have any finite number of states, the rates \(c_i,\,i\in\mathcal S\), can take any real value, and extensions incorporating boundary conditions (for example, \cite{bean2009}) or level-dependent behaviour (for example, \cite{bo2008}) remain mathematically tractable. This combination has made fluid queues a useful modelling tool across telecommunications \cite{anick1982}, power systems \cite{hydro}, risk processes \cite{betal2005}, environmental modelling \cite{wurm2020}, and dam storage \cite{loynes1962}.

Despite this tractability, a key motivating application, the analysis of \emph{fluid-fluid queues} \cite{bo2014}, proceeds via operator-analytic expressions written in terms of the fluid queue's generator, and these expressions are differential operators that are not, in general, readily computable. Approximation is therefore needed. Applying our construction to fluid-fluid queues yields a fluid process with rational arrival process components \cite{bgnp2021} that retains a stochastic interpretation, and therefore automatically inherits correct probabilistic properties (e.g., non-decreasing distribution functions, conservation of mass). These are properties that are not guaranteed by every approximation method.

Two previous approximations have been developed that are suitable for the application to fluid-fluid queues. The QBD approximation of \cite{bo2013} has a probabilistic interpretation and so guarantees valid probabilities (non-negative, bounded above by 1, and integrating to 1) but requires a fine discretisation to be accurate. The Discontinuous Galerkin (DG) approximation of \cite{blnos2022} is far more accurate for smooth problems at a given discretisation level, but can oscillate at discontinuities badly enough to produce negative mass or probabilities exceeding 1.

Fluid queues have been well studied, with expressions available for stationary distributions and probabilities (or Laplace transforms of probabilities) of certain transient events, such as hitting times and first-return times \cite{ajr2005,ar2004,bean2005,latouche2018,ramaswami1999matrix,dasilva2005}. As argued in \cite{blnos2022}, most of the existing analyses do not give expressions which are suitable for the application to fluid-fluid queues as they do not capture necessary spatial information. For example, \cite{ramaswami1999matrix} constructs a QBD to evaluate the matrix of first-return probabilities of a fluid queue, but there is no direct mapping between the level process of the QBD and the level process of the fluid queue. Similarly, the QBD in \cite{ar2004} represents the workload in the queue and not the level itself. 

In this paper we develop a new approximation that retains the probabilistic guarantees of \cite{bo2013} while promising improved numerical accuracy for a given discretisation level. This paper establishes the technical foundations: the construction of the QBD-RAP and a full proof of its convergence to the fluid queue, with numerical results demonstrating its accuracy to follow in subsequent work (see also \cite{a2023}).

There are three main contributions of this paper: (1) the construction of the QBD-RAP process itself, (2) the construction of the closing operator, which uses bases of conditional residual time distributions to recover an approximation to the distribution of the fluid queue from the level and orbit processes of the QBD-RAP, and (3) distributional results for the QBD-RAP. The main distributional result is a weak convergence of the approximation scheme and is constructed via convergence of generators, utilising the aforementioned closing operators. We also show that the phase processes of the QBD-RAP and fluid queue have the same distribution.

To the authors' knowledge, this is the first generator-theoretic convergence result for a process with RAP components. Typical analysis of processes with RAP components is carried out using the \textit{orbit process} \cite{ab1999,bgnp2021,bo2013}, and in particular, by computing the expectation of the orbit process on certain events. The novel approach used here, for both the closing operator and the convergence proof, does not use the orbit process directly and instead constructs bases of conditional residual time distributions to analyse the process and its generator. One advantage of using the conditional residual time distributions is that they have a direct probabilistic interpretation, whereas the probabilistic interpretation of the orbit process is more abstract. 

The structure of the paper is as follows. Section \ref{sec:Preliminaries} introduces fluid queues and provides background on matrix exponential distributions and QBD-RAPs. Section \ref{sec:QBD-RAP Approximation} constructs the QBD-RAP approximation to the fluid queue. Section \ref{sec:closing operator} introduces the closing operator, a tool used to approximate the distribution of the fluid queue from the orbit process of the QBD-RAP. Section \ref{sec:Convergence of the QBD-RAP approximation} proves the convergence of the QBD-RAP to the fluid queue. Section \ref{sec:Conclusion} makes concluding remarks. Some technical results are left to the appendix.

\section{Preliminaries}
\label{sec:Preliminaries}

\subsection{Fluid Queues}
Denote a fluid queue by \(\{\bs X(t)\}_{t\geq0} = \{(X(t), \varphi(t))\}_{t\geq0}\), where \(\{\varphi(t)\}_{t\geq0}\) is called the phase process and is a continuous-time Markov chain (CTMC) with finite state space \(\mathcal S\), and \(\{X(t)\}_{t\geq0}\) is called the level process. Denote the generator of the phase process by \(\bs T=[T_{ij}]_{i,j\in\mathcal S}\). Each state in \(\mathcal S\) has an associated rate \(c_i\in\mathbb R,\, i \in\mathcal S\). In this paper, for simplicity, we assume \(c_i\neq 0\); however, the results here can be extended to the case \(c_i=0\) (see \cite{a2023}, although a different resolvent-based proof technique is used there). On the event \(\{\varphi(t)=i\}\) the level process moves linearly (assuming \(X(t)\) is not at a boundary) with rate \(\cfrac{\wrt}{\wrt t}X(t)=c_i\). 

The level process can have a bounded, semi-bounded, or unbounded domain. In this paper we consider bounded fluid queues only as these are practical when implementing the approximation numerically. Without loss of generality, assume \(\{X(t)\}_{t\geq0}\) has a lower boundary at \(0\) and an upper boundary at \(b\), \(0<b<\infty\). For simplicity, we assume \textit{regulated} boundaries with the following behaviour (see \cite{a2023} for a treatment of other boundary behaviours). Let \(\mathcal N\) (respectively, \(\mathcal P\)) be the set of states \(i\in\mathcal S\) such that \(c_i<0\) (respectively, \(c_i>0\)). When the level process \(\{X(t)\}_{t\geq0}\) hits the lower boundary \(0\) (upper boundary \(b\)) and does so when the phase is \(i\in\mathcal N\) (\(i\in\mathcal P\)), then the level process remains at the boundary while the phase remains in \(\mathcal N\) (\(\mathcal P\)) until the first time that the phase process enters \(\mathcal P\) (\(\mathcal N\)). The evolution of the level process can be expressed mathematically as follows.
\[\cfrac{\wrt}{\wrt t} X(t) = \begin{cases} c_{\varphi(t)}, & \mbox{ if } 0<X(t)<b, \\ \max\{0,c_{\varphi(t)}\}, & \mbox{ if } X(t)=0, \\ \min\{0,c_{\varphi(t)}\}, & \mbox{ if } X(t)=b.  \end{cases}\]

Let \(\mathcal L\) denote the vector space of bounded, real-valued functions on \([0,b]\times \mathcal S\), equipped with the supremum norm \(||\cdot||_{\infty}\). Thus, \(\mathcal L\) is a complete normed vector space and therefore a Banach space. For \(f\in\mathcal L\), \(f:[0,b]\times \mathcal S\to \mathbb R\) the transition semigroup of the fluid queue is the family of operators \(\{T(t)\}_{t\geq 0}\) defined by 
\[T(t)f(x,i)=\mathbb E[f(X(t),\varphi(t))\mid X(0)=x,\varphi(t)=i].\]
The generator of a fluid queue is the differential operator, \(B\), with
\[Bf(x,i) = \sum_{j\in\mathcal S}T_{ij}f(x,j) + c_i\cfrac{\wrt}{\wrt x}f(x,i),\]
along with boundary conditions \(Bf(0,i)=\sum_{j\in\mathcal S}T_{ij}f(0,j),\) \(i\in\mathcal N\) and \(Bf(b,i)=\sum_{j\in\mathcal S}T_{ij}f(b,j),\) \(i\in\mathcal P\). The domain of the generator, \(\mathrm{Dom}(B)\), is the set of functions for which \(B\) above is well-defined, i.e.,~the subset of functions \(f\in\mathcal L\) for which \(f\) is continuously differentiable with respect to the first (spatial, \(x\)) variable on \((0,b)\) for each \(i\in\mathcal S\), with \(f_x(\cdot,i)\) extending continuously on \([0,b]\). We show convergence of the QBD-RAP approximation via convergence of the generator of the QBD-RAP to the generator, \(B\), of the fluid queue.

For future reference, define the diagonal matrices of rates \(\bs C=\mbox{diag}(|c_i|, i\in\mathcal S)\), \(\bs C_-=\mbox{diag}(|c_i|, i\in\mathcal N)\), \(\bs C_+=\mbox{diag}(|c_i|, i\in\mathcal P)\), and the sub-generator matrices \(\bs T_{++}=[T_{ij}]_{i,j\in\mathcal P}\), \(\bs T_{+-}=[T_{ij}]_{i\in\mathcal P,j\in\mathcal N}\), \(\bs T_{-+}=[T_{ij}]_{i\in\mathcal N, j\in\mathcal P}\), and \(\bs T_{--}=[T_{ij}]_{i,j\in\mathcal N}\).

\subsection{Matrix exponential distributions}
The approximation scheme described in this paper is based on matrix exponential distributions. A random variable, \(Z\), has a matrix exponential distribution if it has a distribution function of the form \(1-\bs \alpha e^{\bs Sx}(-\bs S)^{-1}\bs s\), where \(\bs \alpha\) is a \(1\times p\) \emph{initial vector}, \(\bs S\) a \(p\times p\) matrix, and \(\bs s\) a \(p\times 1\) \emph{closing vector}, and \(\displaystyle e^{\bs S x} = \sum_{n=0}^\infty \cfrac{\left(\bs Sx\right)^n}{n!}\) is the matrix exponential. The density function of \(Z\) is given by \(f_Z(x) = \bs\alpha e^{\bs S x}\bs s\). The parameters \((\bs \alpha, \bs S, \bs s)\) must satisfy \(\bs \alpha e^{\bs S x} \bs s\geq 0,\) for all \(x\geq 0\) and \(\lim_{x\to\infty} 1-\bs \alpha e^{\bs Sx}(-\bs S)^{-1}\bs s = 1\) so that \(f_Z\) is a density function. In general there is the possibility of an \emph{atom} (a point mass) at 0, but here we do not consider this possibility. The class of matrix exponential distributions is an extension of Phase-type distributions where, for the latter, \(\bs S\) must be a sub-stochastic generator matrix of a CTMC, \(\bs s = -\bs S\bs e\) where \(\bs e\) is a \(p\times 1\) vector of ones, and \(\bs \alpha\) is a discrete probability distribution.  

A \emph{representation} of a matrix exponential distribution is a triplet \((\bs \alpha, \bs S, \bs s)\), and we write \(Z\sim ME(\bs \alpha, \bs S, \bs s)\) to denote that \(Z\) has a matrix exponential distribution with this representation. Representations of matrix exponential distributions are not unique \cite[Section~3.1]{MEinAP}. The \emph{order} of the representation is the dimension of the square matrix \(\bs S\), i.e.~if \(\bs S\) is \(p\times p\) the representation is said to be of order \(p\). A representation is called \emph{minimal} when \(\bs S\) has the smallest possible dimension and a minimal representation can always be found for a matrix exponential distribution \cite[Corollary 4.2.8]{MEinAP}. Throughout this paper we assume that the representation of any matrix exponential distribution is minimal. 

Let \(\bs e_i\) be a vector with a 1 in the \(i\)th position and zeros elsewhere. Throughout this paper we assume that the closing vector of a matrix exponential distribution is given by \(\bs s = -\bs S\bs e\), and that \((\bs e_i\tr{},\bs S,\bs s)\) for \(i=1,\dots,p\) are representations of matrix exponential distributions. It is always possible to find such a representation \cite[Theorem 4.5.17, Corollary 4.5.18]{MEinAP}. As such, we abbreviate our notation \(Z\sim ME(\bs \alpha, \bs S, \bs s)\) to \(Z\sim ME(\bs \alpha, \bs S)\).

For numerical implementation of the QBD-RAP approximation the intention is to use \emph{concentrated matrix exponential distributions} (CMEs) \cite{hstz2016,hht2020} in the construction. The class of concentrated matrix exponential distributions is a certain set of matrix exponential distributions with a very low coefficient of variation (variance relative to the mean). The class of concentrated matrix exponential distributions (CMEs) is found numerically in \cite{hht2020}. \cite{hht2020} demonstrate numerically that the variance of concentrated matrix exponential distributions decreases at rate approximately \(\mathcal O(1/p^2)\) as \(p\) increases, where \(p\) is the order of the representation. For comparison, the variance of an Erlang distribution, the most concentrated Phase-type distribution, decreases at rate \(\mathcal O(1/p)\) as the order \(p\) increases \cite{as1987}. Similar to the idea of Erlangization (see, for example, \cite{aau2002, he2023}), in this paper we use CMEs to approximate the distribution of deterministic events. Since CMEs have a lower coefficient of variation than Erlang distributions then, intuitively, we expect that the QBD-RAP scheme will have smaller approximation errors than the QBD approximation \cite{bo2013}. Note, that the arguments for convergence are not dependent on properties specific to CMEs.

\subsection{QBD-RAPs}\label{sec: qbd-rap}
QBD-RAPs are formally introduced in \cite{bn2010} as extensions of QBDs which allow matrix exponential times between level changes. QBD-RAPs are built from a marked (Batch) rational arrival process (BRAP), itself an extension of the rational arrival process (RAP) of \cite{ab1999} (see also Section~10.5 of \cite{MEinAP}). Informally, a RAP is a point process \(N\) for which there is a finite-dimensional space of measures \(V\) such that, for any \(t\), the conditional distribution of the process after time \(t\) given its history up to \(t\) always lies in \(V\). A BRAP extends this to marked point processes, with marks taking values in a countable set \(\mathscr K\subset\mathbb Z\), where each mark type is required to occur infinitely often. We do not use the formal (measure-theoretic) definitions of RAPs and BRAPs directly; rather, we use the following characterisation of a BRAP, analogous to the characterisation of a RAP given in \cite{ab1999}.

Let \(N\) be a marked point process on \((0,\infty)\) with event times \(Y_0=0<Y_1<Y_2\cdots\), counting process \(\{N(t)\}_{t\geq0}\), and shift operator \(\theta_t N=\{N(t+s)\}_{s\geq0}\). Let \(\mathcal F_t\) be the \(\sigma\)-algebra generated by \(N(u),\,u\in[0,t]\). Suppose the mark of the \(n\)th event is \(M_n\), taking values in \(\mathscr K\), and for \(i\in\mathscr K\) let \(\{N_i(t)\}_{t\geq0}\) be the counting process of events with mark \(i\). Denote by \(f_{N,n}(y_1,m_1,y_2,m_2,\dots,y_n,m_n)\) the joint density/probability mass function of the first \(n\) inter-arrival times, \(Y_1,Y_2-Y_1,\dots,Y_n-Y_{n-1}\), and the associated marks \(M_1,...,M_n\). For a real square matrix \(\bs E\) of size \(p\times p\) with eigenvalues \(\lambda_i,\,i=1,...,p\), such that \(Re(\lambda_1)\geq Re(\lambda_2)\geq ... \geq Re(\lambda_p)\), we define \(dev(\bs E)=\lambda_1\), the \textit{dominant real eigenvalue} of \(\bs E\). For generator matrices of ME random variables that are minimal, e.g., the matrix \(\bs S\), we can always choose the dominating eigenvalue to be real \cite[Theorem~4.1.4]{MEinAP}.
\begin{theorem}[Theorem~1 of \cite{bn2010}]\label{thm:qbdrapchar}
A process \(N\) is a BRAP if there exist matrices \(\bs S\), \(\bs D_i,\,i\in\mathscr K\), and a row vector \(\bs \alpha\) such that \(dev(\bs S)<0\), \(dev(\bs S+\bs D)=0\), \((\bs S+\bs D)\bs e = 0\), \(\bs D = \sum_{i\in\mathscr K} \bs D_i\), and 
\begin{align}f_{N,n}(y_1,m_1,y_2,m_2,\dots,y_n,m_n) = \bs \alpha e^{\bs S y_1}\bs D_{m_1} e^{\bs S y_2} \bs D_{m_2}\dots e^{\bs S y_n} \bs D_{m_n}\bs e.\label{eqn: brap}\end{align}
Conversely, if a point process has the property (\ref{eqn: brap}) then it is a BRAP.
\end{theorem}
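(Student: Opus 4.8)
The plan is to adapt the argument used for the unmarked case in \cite{ab1999} to the marked setting, carrying the marks along with the event times; this is how Theorem~1 of \cite{bn2010} is established. There are two implications.

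\textbf{The representation implies a Marked RAP.} Suppose $\bs S$, $\bs D_i\ (i\in\mathscr K)$ and $\bs\alpha$ exist with the stated properties and the joint densities factor as in (\ref{eqn: brap}). I would verify the defining properties of a BRAP directly. Fix $t$ and condition on $\mathcal F_t$: on the event that exactly $n$ arrivals have occurred by time $t$, at times $y_1<\dots<y_n\le t$ with marks $m_1,\dots,m_n$, a short computation using (\ref{eqn: brap}), the fact that $\bs S$ commutes with itself (so the inter-arrival interval straddling $t$ splits as $e^{\bs S(t-y_n)}e^{\bs S\tilde y_1}$), and Bayes' rule shows that the joint density of the inter-arrival times and marks of $\theta_tN$ is again of the form (\ref{eqn: brap}), with $\bs\alpha$ replaced by the updated row vector
\[
\bs\alpha_t=\frac{\bs\alpha\, e^{\bs Sy_1}\bs D_{m_1}e^{\bs S(y_2-y_1)}\bs D_{m_2}\cdots\bs D_{m_n}\, e^{\bs S(t-y_n)}}{\bs\alpha\, e^{\bs Sy_1}\bs D_{m_1}e^{\bs S(y_2-y_1)}\bs D_{m_2}\cdots\bs D_{m_n}\, e^{\bs S(t-y_n)}\,\bs e}\in\mathbb R^{1\times p}.
\]
Hence $\mathbb P(\theta_tN\in\cdot\mid\mathcal F_t)$ admits the version $\mu(t,\omega)$ equal to the law whose joint densities are $\bs\beta\, e^{\bs S\tilde y_1}\bs D_{\tilde m_1}\cdots e^{\bs S\tilde y_n}\bs D_{\tilde m_n}\bs e$ with $\bs\beta=\bs\alpha_t(\omega)$ (and, on the $\mathbb P$-null set where the denominator above is not positive or where infinitely many arrivals have already occurred, $\mu(t,\omega)$ set to a fixed member of this family). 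Every such measure lies in the subspace
\[
V=\bigl\{\text{the law with joint densities }\bs\beta\, e^{\bs Sy_1}\bs D_{m_1}\cdots e^{\bs Sy_n}\bs D_{m_n}\bs e,\ n\ge 0\ :\ \bs\beta\in\mathbb R^{1\times p}\bigr\}\subset\mathscr M(\mathscr N),
\]
which has dimension at most $p$; this is precisely where the matrix-exponential structure is used, since advancing time and registering a mark act by right multiplication on a $p$-dimensional state vector. Finally, $dev(\bs S)<0$ makes the survival function $s\mapsto\bs\alpha_t e^{\bs Ss}\bs e$ of the time to the next arrival vanish as $s\to\infty$, so inter-arrival times are a.s.\ finite; and since $(\bs S+\bs D)\bs e=\bs 0$ with $0$ the dominant eigenvalue of $\bs S+\bs D$, the embedded chain of post-arrival state vectors is recurrent, so a renewal argument as in the unmarked case gives $\mathbb P(N_i(0,\infty)=\infty)=1$ for each $i$.

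\textbf{A point process with property (\ref{eqn: brap}) is a Marked RAP.} In this direction one must produce the finite-dimensional $V\subset\mathscr M(\mathscr N)$ demanded by the definition. If the joint densities already have the product form (\ref{eqn: brap}), one takes $V$ exactly as displayed above and invokes the conditional-density computation of the first part. Read as a genuine converse --- that any Marked RAP admits a representation of the form (\ref{eqn: brap}) --- one instead begins with the $V$ supplied by the definition, replaces it by its smallest shift-invariant subspace, fixes a basis $\nu_1,\dots,\nu_p$, and reads off $\bs S$ as the generator of the translation semigroup acting linearly on coordinates, $\bs D_i$ as the linear map induced by an arrival with mark $i$, and $\bs\alpha$ as the coordinate vector of $\mathbb P(N\in\cdot)$; the side-conditions then follow from normalisation of the densities ($(\bs S+\bs D)\bs e=\bs 0$), a.s.\ finiteness of inter-arrival times ($dev(\bs S)<0$) and a.s.\ non-termination ($dev(\bs S+\bs D)=0$).

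\textbf{Main obstacle.} The product-form manipulations are routine; the delicate parts are (a) the measure-theoretic bookkeeping in the first implication --- verifying that $\mu(t,\cdot)$ is a genuine regular conditional distribution and that it can be taken inside $V$ for \emph{every} $\omega$, not merely almost every $\omega$ --- and (b) in the converse, producing a genuinely shift-invariant finite-dimensional subspace on which translation is a strongly continuous semigroup with bounded (matrix) generator, so that $\bs S$ is well defined; this is the functional-analytic heart of the RAP characterisation in \cite{ab1999}, and I would handle it by mimicking that argument with a mark attached to each event.
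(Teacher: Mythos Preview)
The paper does not prove this theorem at all: it is quoted verbatim as Theorem~1 of \cite{bn2010} (itself the marked analogue of the characterisation in \cite{ab1999}), and no argument is given in the paper beyond that citation. So there is no ``paper's own proof'' to compare your proposal against; you have in fact supplied more than the paper does. Your sketch is broadly faithful to how the result is established in \cite{bn2010} and \cite{ab1999}: the orbit/state-vector $\bs\alpha_t$ you write down is exactly the $\bs A(t)$ the paper introduces immediately after the theorem, and the finite-dimensional $V$ you exhibit is the one that appears in those references.

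One remark: you correctly flag that the theorem, as phrased in the paper, is slightly awkward --- both clauses read as ``representation $\Rightarrow$ Marked RAP'', and the genuine converse (Marked RAP $\Rightarrow$ representation) is only implicit. Your proposal handles both readings, which is more than the paper asks. The measure-theoretic caveats you list under ``Main obstacle'' (a version of $\mu(t,\cdot)$ landing in $V$ for every $\omega$, and the construction of the shift-invariant subspace with a bounded generator) are indeed the substantive points in \cite{ab1999}; the paper simply defers to that reference rather than reproving them.
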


Denote such a process \(N\sim BRAP(\bs \alpha, \bs S, \bs D_i,\,i\in\mathscr K).\) 

In the construction of the QBD-RAP approximation scheme we utilise the \textit{orbit process} description of the BRAP, which we now recall for completeness. From \cite{bn2010}, associated with a BRAP is a row-vector-valued {orbit} process, \(\{\bs A(t)\}_{t\geq0},\)
\[\bs A(t) = \cfrac{\bs \alpha\left( \prod\limits_{i=1}^{N(t)} e^{\bs S(Y_{i}-Y_{i-1})}\bs D_{M_i}\right)e^{\bs S (t-Y_{N(t)})}}{\bs \alpha\left( \prod\limits_{i=1}^{N(t)} e^{\bs S(Y_{i}-Y_{i-1})}\bs D_{M_i}\right)e^{\bs S( t-Y_{N(t)})}\bs e}.\]
Thus, \(\{\bs A(t)\}\) is a piecewise-deterministic Markov process which, in between events, evolves deterministically according to 
\[\bs A(t) = \cfrac{\bs A(Y_{N(t)})e^{\bs S(t-Y_{N(t)})}}{\bs A(Y_{N(t)})e^{\bs S(t-Y_{N(t)})}\bs e}.\]
Equivalently, \(\{\bs A(t)\}\) evolves deterministically according to the differential equation
\[\cfrac{\wrt}{\wrt t} \bs A(t) = \bs A(t)\bs S - \bs A(t)\bs S\bs e\cdot \bs A(t),\] 
between events. 

The process \(\{\bs A(t)\}\) can jump at event times of \(N\) (the process may, but does not necessarily, jump at these times, but we refer to it as a `jump' and typically the dynamics change at this point). At time \(t\) the intensity with which \(\{\bs A(t)\}\) has a jump is \(\bs A(t) \bs D\bs e\), i.e.~\(\mathbb P(N(t)=n,N(t+dt)=n+1)=\bs A(t) \bs D\bs e\wrt t\). Upon an event at some time \(t\), the event is associated with mark \(i\) with probability \(\bs A(t) \bs D_i\bs e/\bs A(t) \bs D\bs e\). Upon an event at time \(t\) with mark \(i\), the new position of the orbit is \(\bs A(t) = \bs A(t^-) \bs D_i/\bs A(t^-) \bs D_i\bs e\). Thus, jumps of the orbit process are {linear} transformations of the orbit process at the time immediately before the jump. 

A key property of BRAPs is that the distribution of the process, conditional on the orbit process up to and including time \(t\), can be expressed in terms of \(\bs A(t)\) and measures \(v_1,...,v_p\in V\) which form a basis for the space \(V\); 
\begin{align}\label{eqn: rapproperty}
	\mu(t,\cdot)=\mathbb P(\theta_tN\in \cdot\mid \mathcal F_t)=\bs A(t)\bs v(\cdot),
\end{align} 
where \(\bs v = [v_1,...,v_p]\tr\).

BRAPs are an extension of marked Markovian arrival processes to include matrix exponential inter-arrival times. For marked MAPs, the vector \(\bs A(t)\) is a vector of posterior probabilities of a continuous-time Markov chain. 

Intuitively, \(\bs A(t)\) encodes all the information about the event times of the BRAP and the associated marks up to time \(t\), that is needed to determine the future behaviour of the point process. Let \(\mathcal F_{t}\) be the \(\sigma\)-algebra generated by \(N(u), u\in[0,t]\), then the distribution of \(N\) given \(\mathcal F_t\) is equivalent to the distribution of \(N\) given \(\bs A(t)\), and the distribution is \(N|\bs A(t) \sim BRAP(\bs A(t), \bs S, \bs D_i,i\in\mathscr K)\).

Consider a BRAP, \(N\sim BRAP(\bs \alpha, \bs S, \bs D_i,\,i\in\{-1,0,+1\}),\) then the process \(\{(L(t),\bs A(t))\}_{t\geq0}\) formed by letting \(L(t) = N_{+1}(t) - N_{-1}(t)\) is a QBD-RAP \cite{bn2010}.

\section{QBD-RAP Approximation}
\label{sec:QBD-RAP Approximation}
In this section we describe the approximation and its dynamics. 

\subsection{Construction}\label{subsec: Construction}
Consider a regular grid of points \(y_k = (k-1)\Delta,\,\Delta = b/K,\, k=1,...,K+1\) in the interval \([0,b]\) (the restriction to a regular grid of points is for convenience, in general, the grid does not need to be regular, see \cite{a2023}). For states \(i\in\mathcal N\), partition the interval \([0,b]\) into the set \(\mathcal D_{0,-} = \{0\}\) and regular intervals \(\mathcal D_{k,-} = (y_k, y_{k+1}],\, k=1,...,K\). Similarly, for states \(i\in\mathcal P\) partition the interval \([0,b]\) into the set \(\mathcal D_{K+1,+} = \{b\}\) and regular intervals \(\mathcal D_{k,+} = [y_k, y_{k+1}),\, k=1,...,K\). For simplicity, introduce the notation \(\mathcal D_{k,i}=\mathcal D_{k,\sign{(c_i)}}\).

Denote the approximating QBD-RAP by \(\{(L(t), \bs A(t), \rapphase(t))\}_{t\geq0}\) where \(\{L(t)\}\) is the level process, \(\{\bs A(t)\}_{t\geq0}\) is the orbit process, and \(\{\rapphase(t)\}_{t\geq0}\) is the phase process. The level approximates the process 
\begin{align}
    \displaystyle \left\{\sum_{k\in{0,...,K+1}} k1(X(t)\in\calD_{k,\varphi(t)})\right\}_{t\geq 0}. \label{eqn: kfj}
\end{align}
We show later that the phase process \(\{\rapphase(t)\}_{t\geq0}\) is equal in distribution to the phase process of the fluid queue, \(\{\varphi(t)\}_{t\geq0}\). The value of the orbit process \(\bs A(t)\) can be interpreted to (approximately) capture information about the density of the position of \(X(t)\) within the intervals \(\mathcal D_{k,m},\,m\in\{+,-\},\,k\in\{0,...,K+1\}\). More on this later too.

We approximate the dynamics of the fluid queue in intervals using matrix exponential distributions with low coefficient of variation (e.g., CMEs) to approximate deterministic events. So, throughout, consider a matrix exponential random variable \(Z\sim ME(\bs \alpha, \bs S)\) with mean \(\Delta\) and small variance. Recall that we can choose the ME representation \((\bs\alpha, \bs S, \bs s)\) such that \(\bs\alpha = \bs e_1\tr{}\) and \(\bs e_n \tr{} e^{\bs Sx}\bs s,\, n=1,...,p\) are densities of matrix exponential distributions. Specifically, by \cite[Theorem 4.5.17, Corollary 4.5.18]{MEinAP}, we choose the representation such that \(\bs e_n \tr{} e^{\bs Sx}\bs s,\, n=1,...,p\) are the densities of the distributions of the conditioned residual times, \(Z-t_n \mid Z\geq t_n\), for some \(0=t_1<2\varepsilon<t_2<...<t_p<\Delta - \varepsilon\) for \(\varepsilon>0\) sufficiently small, i.e.
\[\mathbb P(Z-t_n\geq x\mid Z>t_n)=\bs e_n\tr{} e^{\bs Sx}\bs e,\,x\geq 0.\] 
The times \(0=t_1<2\varepsilon<t_2<...<t_p<\Delta - \varepsilon\) must be chosen such that \(x\mapsto \bs e_n\tr{} e^{\bs Sx}\bs e,\, n=1,...,p,\) are linearly independent. Suitable times \(0=t_1<2\varepsilon<t_2<...<t_p<\Delta - \varepsilon\) can always be found by choosing \(t_n,\, n=2,...,p,\) relatively prime to \(2\pi \sigma_k,\,k=1,...,q<p/2\) where \(\sigma_k\) are the magnitudes of the imaginary parts of the \(q\) conjugate pairs of complex eigenvalues of \(\bs S\) \cite[Remark 4.5.19]{MEinAP}. In the limit as \(p\to\infty\), there are at most countably many points that are not relatively prime to \(2\pi \sigma_k,\,k=1,...,q<p/2\), so there are uncountably many choices for \(t_1,...,t_p.\)

For the proof of convergence, we further restrict the choice of \(t_1,...,t_p\). Specifically, for a given \(\delta>0\) and assuming \(p\) is sufficiently large and \(\var(Z)\) is sufficiently small, we choose \(t_1,...,t_p\) such that \(t_1=0<2\var(Z)^{1/3} < t_2 < t_3 < ... < t_p < \Delta-\var(Z)^{1/3}<\Delta=t_{p+1}\), also \(\max\limits_{n=1,...,p}|t_n-t_{n+1}|<\frac{1}{2}\delta\), and \(\cfrac{\bs \alpha e^{\bs S t_n}\bs s}{\bs \alpha e^{\bs S t_n}\bs e} \leq \cfrac{\var(Z)^{1/3}}{{\frac{1}{4}\delta}\,(1-\var(Z)^{1/3})}\). The following result states that this is always possible, provided the variance \(Z\) is sufficiently small and the order of its minimal representation is sufficiently large.
\begin{lemma}\label{lem: points}
	Let \(Z\) be a matrix exponential random variable. Given \(\delta>0\), if \(\var(Z)\) is sufficiently small and the order of the minimal representation of \(Z\) is sufficiently large, then we can choose \(t_1,...,t_p\), such that \(t_1,...,t_p\), are relatively prime to \(2\pi\sigma_i\), \(i=1,...,q\), where \(\sigma_i\) is the magnitude of the imaginary part of the complex conjugate pairs of eigenvalues of \(\bs S\), and
	\begin{align}
		2\var(Z)^{1/3} < t_2 < t_3 < ... < t_p < \Delta-\var(Z)^{1/3}<\Delta=t_{p+1},\label{eqn: cond 1}
	\end{align}
	\begin{align}
		\max\limits_{n=1,...,p}|t_n-t_{n+1}|<\frac{1}{2}\delta,\label{eqn: cond 2}
	\end{align}
	and 
	\begin{align}
		\cfrac{\bs \alpha e^{\bs S t_n}\bs s}{\bs \alpha e^{\bs S t_n}\bs e} < \cfrac{\var(Z)^{1/3}}{{\frac{1}{4}\delta}\,(1-\var(Z)^{1/3})}\label{eqn: cond 3}
	\end{align}
	are satisfied.
\end{lemma}
\begin{proof}
	Let \(I\subset [2\var(Z)^{1/3}, \Delta-\var(Z)^{1/3}]\) be any closed interval of length \(\frac{1}{4}\delta\). By the Mean Value Theorem, there exists \(r\) in the interior of \(I\) such that 
	\[
		\cfrac{\int_{I} \bs \alpha e^{\bs S t}\bs s\wrt t}{\frac{1}{4}\delta} = \bs \alpha e^{\bs S r}\bs s,
	\]
	 and by Chebyshev's inequality \(\int_{I} \bs \alpha e^{\bs S t}\bs s\wrt t< \var(Z)^{1/3}\), where the inequality is strict since \(Z\) is not degenerate or supported on only two points, and therefore \(\bs \alpha e^{\bs S r}\bs s< \cfrac{\var(Z)^{1/3}}{{\frac{1}{4}\delta}}\). Since \(t\mapsto \bs \alpha e^{\bs S t}\bs s\) is a continuous function there exists an uncountable number of points in \(I\) such that \(\bs \alpha e^{\bs S t}\bs s< \cfrac{\var(Z)^{1/3}}{{\frac{1}{4}\delta}}\). Moreover, since there are at most countably many points that are not relatively prime to \(2\pi\sigma_i\), \(i=1,...,q\), then there are uncountably many points in \(I\) that are relatively prime to \(2\pi\sigma_i\), \(i=1,...,q\). Hence, in any interval of length \(\frac{1}{2}\delta\) (two adjacent intervals of length \({\frac{1}{4}\delta}\)), there is an uncountable number of pairs \(t_n, t_{n+1}\) with \(\frac{1}{4}\delta < |t_n-t_{n+1}|< \frac{1}{2}\delta\) such that \(\bs \alpha e^{\bs S t_n}\bs s< \cfrac{\var(Z)^{1/3}}{{\frac{1}{4}\delta}}\) and \(\bs \alpha e^{\bs S t_{n+1}}\bs s < \cfrac{\var(Z)^{1/3}}{{\frac{1}{4}\delta}}\) that are also relatively prime to \(2\pi\sigma_i,\,i=1,...,q\). This establishes the constraints of the lemma are satisfied for a single pair of points. To establish the constraints hold for the whole sequence of points \(t_1,...,t_p\), then partition \([2\var(Z)^{1/3}, \Delta-\var(Z)^{1/3}]\) into disjoint intervals of length \(\delta/4\), and then pick \(t_2,...,t_p\) from each interval such that the constraints are satisfied pairwise noting also that \(t_2<\delta/2\) and \(\Delta-t_p<\delta/2\) hold since \(\var(Z)^{1/3}\) is small relative to \(\delta\), so that \(t_1=0\) and \(t_{p+1}=\Delta\) satisfy \eqref{eqn: cond 2} as well. This completes the proof.
\end{proof}

In the rest of this section we describe the construction and dynamics of the QBD-RAP process.

\subsubsection{Up to the first event after entering a level}\label{subsubsec: first event}
Suppose first that \(X(t)=y_k\) and \(\varphi(t)=i\in\mathcal P\) and let \(\mathcal E(t)=\{\varphi(s)=i,\, \forall \, s\geq t\}\) (respectively, \(\mathcal G(t)=\{\rapphase(s)=i,\, s\geq t\}\)) be the event that the phase of the fluid queue (respectively, QBD-RAP) is constant. On \(\mathcal E(t)\), the process \(\{X(t)\}_{t\geq0}\) leaves \(\mathcal D_{k,+}\) at exactly time \(t + \Delta/|c_i|\). To approximate this deterministic time, consider the random variable \(Z/|c_i|\). The random variable \(Z/|c_i|\) is concentrated at the time \(\Delta/|c_i|\) since
\begin{align*}
	\mathbb P(Z/|c_i| \in{}((\Delta-\varepsilon)/|c_i|, (\Delta+\varepsilon)/|c_i|) ) 
	&= \mathbb P(Z\in{}(\Delta-\varepsilon, \Delta + \varepsilon))
	\geq 1-\cfrac{\var(Z)}{\varepsilon^2},
\end{align*}
by Chebyshev's inequality and by choosing \(\varepsilon = \var(Z)^{1/3}\) (similar to the choice of \(\varepsilon\) in the proof of Theorem~4 of \cite{hhat2020}), then 
\[\mathbb P(Z_i \in{}((\Delta-\varepsilon)/|c_i|-u, (\Delta+\varepsilon)/|c_i|-u) )\geq 1-\var(Z)^{1/3} \approx 1, \]
when \(\var(Z)\) is small. Hence, for our QBD-RAP approximation, we suppose that the distribution of time that the level process, \(\{L(t)\}_{t\geq0}\), remains at level \(k\) on the event \(\mathcal G(t)\), given \(\{L(t)\}_{t\geq0}\) just entered level \(k\) in Phase~\(i\in\mathcal P\), has the distribution of \(Z/|c_i|\). 

Suppose again that \(X(t)=y_k\) and \(\varphi(t)=i\in\mathcal P\). After \(u\in [0,\Delta/|c_i|)\) amount of time has elapsed and on the event \(\mathcal E(t)\), then \(X(t+u) = y_k + c_i u \in\calD_{k,+}\) and, moreover, \(\{X(t)\}_{t\geq0}\) will leave \(\calD_{k,+}\) at time \(t+\Delta/|c_i|-u\). Consider the residual time \(R_i(u) = (Z/|c_i|-u)1\{Z/|c_i|-u>0\}\) which is \(0\) if \(Z/|c_i|\leq u\), or the time until \(Z/|c_i|\) on the event that \(Z/|c_i|\) is greater than \(u\). The density function of \(R_i(u)\), given \(Z/|c_i|>u\geq 0\), is 
\[
    f_{R_i(u)}(r) = \cfrac{\bs\alpha e^{\bs S|c_i|(u + r) } \bs s|c_i|}{\bs\alpha e^{\bs S|c_i|(u) } \bs e},\quad r\geq 0.
\]
On the event \(Z/|c_i|>u\), the residual time \(R_i(u)\) is approximately \(\Delta/|c_i|-u\), since, for \(\varepsilon>0\) and \(u<(\Delta-\varepsilon)/|c_i|\), 
\begin{align*}
	\mathbb P(R_i(u) \in{}((\Delta-\varepsilon)/|c_i|-u, (\Delta+\varepsilon)/|c_i|-u) ) 
	&= \mathbb P(Z/|c_i|\in{}((\Delta-\varepsilon)/|c_i|, (\Delta+\varepsilon)/|c_i|) ) 
	\\&= \mathbb P(Z\in{}(\Delta-\varepsilon, \Delta + \varepsilon))
	\\&\geq 1-\cfrac{\var(Z)}{\varepsilon^2},
\end{align*}
which, upon choosing \(\varepsilon=\var(Z)^{1/3}\), gives  
\[
    \mathbb P(R_i(u) \in{}((\Delta-\varepsilon)/|c_i|-u, (\Delta+\varepsilon)/|c_i|) )\geq 1-\var(Z)^{1/3} \approx 1,
\]
when \(\var(Z)\) is small. Hence, for the QBD-RAP approximation, we suppose that the distribution of time that the level process, \(\{L(t)\}_{t\geq0}\), remains at level \(k\) on the event \(\mathcal G(t)\), given \(\{L(t)\}_{t\geq0}\) entered level \(k\) exactly \(u\) units of time earlier in Phase~\(i\in\mathcal P\), has the distribution \(R_i(u)\) on the event that \(Z/|c_i|>u\).

On the event \(\mathcal G(t)\), given \(\{L(t)\}_{t\geq0}\) entered level \(k\) exactly \(u\) units of time earlier in Phase~\(i\in\mathcal P\), the orbit position at time \(t+u\) is \(\cfrac{\bs \alpha e^{\bs S |c_i|u}}{\bs \alpha e^{\bs S |c_i|u}\bs e},\) and so we say that this orbit position \textit{corresponds} to position \(y_k+|c_i|u\), which is the level of the fluid queue at time \(t+u\) on the event \(\mathcal G(t)\), given \(\{X(t)\}_{t\geq 0}\) entered \(\mathcal D_{k,\varphi(t)}\) exactly \(u\) units of time earlier in Phase~\(i\in\mathcal P\).

Analogously for states with negative rate, \(\mathcal N\), we suppose that the distribution of time that the level process, \(\{L(t)\}_{t\geq0}\), remains at level \(k\) on the event \(\mathcal G(t)\), given \(\{L(t)\}_{t\geq0}\) entered level \(k\) exactly \(u\) units of time earlier in Phase~\(i\in\mathcal N\) has the distribution \(R_i(u)\) on the event that \(Z/|c_i|>u\); moreover, the orbit position at time \(t+u\) is \(\cfrac{\bs \alpha e^{\bs S |c_i|u}}{\bs \alpha e^{\bs S |c_i|u}\bs e},\) and we say that this orbit position corresponds to position \(y_{k+1}+c_iu\) (where the upper boundary of the interval, \(y_{k+1}\), is present because now the fluid queue enters the interval \(\mathcal D_{k,\varphi(t)}\) from the top).


To demonstrate that \(R_i(u)\) is a reasonable approximation to deterministic events, Figure~\ref{fig:residual distributions} gives an example of a density function of a concentrated matrix exponential random variable \(Z/|c_i|\) with mean \(\Delta=1\) and \(c_i=1\), as well as the density function of \(R_i(0.3)\) conditional on \(Z_i>0.3\) and \(R_i(0.6)\) conditional on \(Z_i>0.6\), for comparison. 

\begin{figure}
\centering
\includegraphics[width=\textwidth]{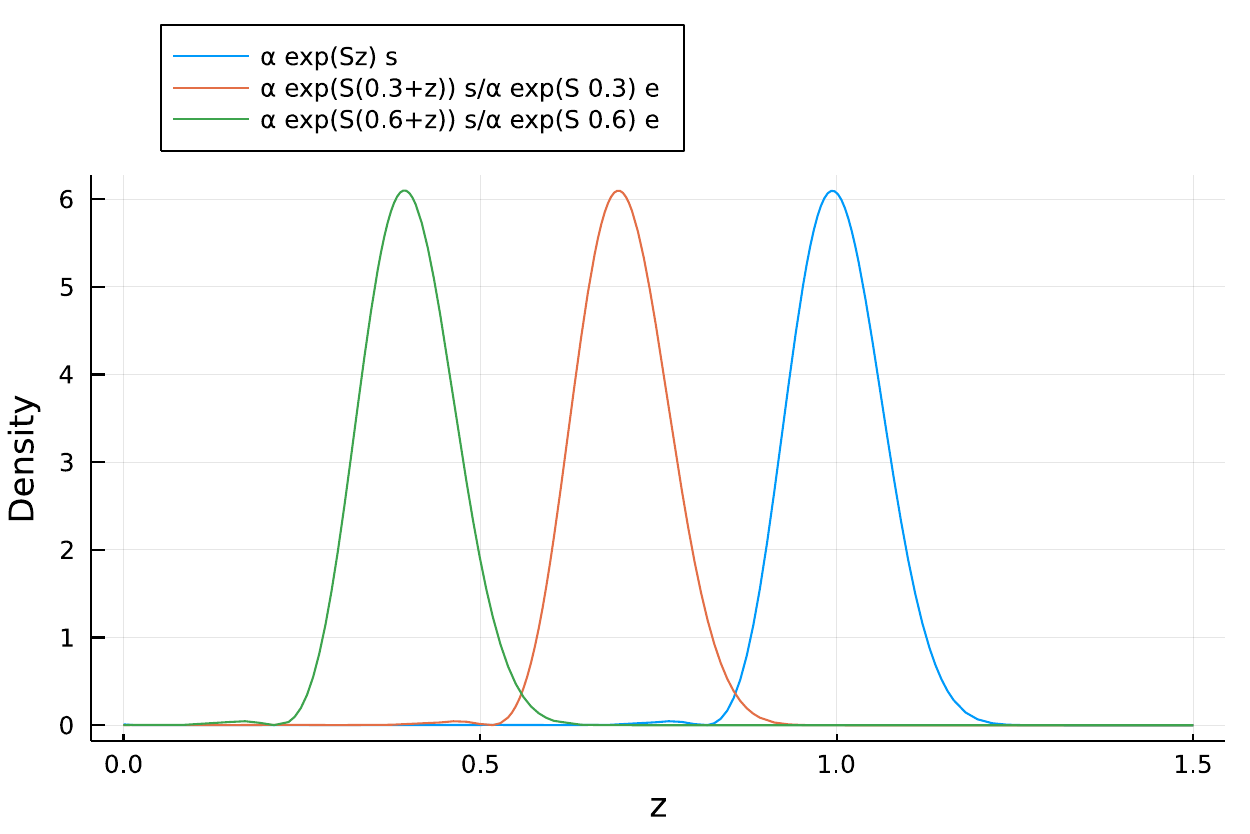}
\caption{The density function for a concentrated matrix exponential of order 21 from \cite{hht2020} (blue) and corresponding conditional density functions of the residual lives, \(R_i(0.3)\) given \(Z>0.3\) (red), and \(R_i(0.6)\) given \(Z>0.6\) (green). Observe how the density function of the \(Z/|c_i|\) (blue) approximates a point mass at \(\Delta=1\), while the density functions of \(R_i({0.3})\) given \(Z>0.3\) (red) and \(R_i({0.6})\) given \(Z>0.6\) (green) approximate point masses at \(0.7\) and \(0.4\), respectively. }
\label{fig:residual distributions}
\end{figure}

\subsubsection{On a change of phase}\label{subsubsec: change of phase}
We claimed above that the orbit position 
\begin{align}
	\bs A(u) = \cfrac{\bs \alpha e^{\bs S|c_i| u}}{\bs \alpha e^{\bs S|c_i| u}\bs e},\label{eqn: orbit i}
\end{align}
corresponds to the position of the fluid queue \(y_{k+1(i\in\mathcal N)}+c_iu\), where \(1(\cdot)\) is the indicator function. If, at time \(s=t+u\) there is a change of phase from \(i\) to \(j\) then we need to map the orbit position \(\bs A(s^-)\) to an orbit position which corresponds to being at \(y_{k+1(i\in\mathcal N)} + c_iu\) but now in Phase~\(j\). Denote this mapping \(\bs D_{i,j}\) with the associated mark \((i,j)\).

Recall that, in general, at event epochs the orbit process may jump according to the matrix \(\bs D_m\) where \(m\) is the mark associated with the event. Specifically, if there is an event at time \(s\) with mark \(m\) the orbit will jump from \(\bs A(s^-)\) to \(\cfrac{\bs A(s^-)\bs D_m}{\bs A(s^-)\bs D_m\bs e}\).

The matrix \(\bs D_{i,j}\) must be chosen to ensure that the process constructed is a valid QBD-RAP. Formally, let \(\mathcal A\) be the convex hull of the set of all row-vectors of the form
\begin{align}
    \bs A(t) = \cfrac{\bs \alpha\left( \prod\limits_{n=1}^{N} e^{\bs S(t_{n}-t_{n-1})}\bs D_{i_n, i_{n+1}}\right)e^{\bs S (t-t_{N})}}{\bs \alpha\left( \prod\limits_{n=1}^{N} e^{\bs S(t_{i}-t_{i-1})}\bs D_{i_n, i_{n+1}}\right)e^{\bs S( t-t_{N})}\bs e}, \label{eqn: mathcal A set}
\end{align}
which sum to unity, where \(0\leq t_1\leq t_2\leq ... \leq t_N \leq t\), \(i_1,...,i_{N+1}\in\mathcal S\). We require that the mappings \(\bs D_{i,j}\) are such that the tuple \((\bs a,\bs S)\) is a valid representation of a matrix exponential distribution for all \(\bs a\) in \(\mathcal A\). This condition ensures that the approximation is a QBD-RAP and hence preserves all probabilistic properties. 

An additional convenient property is \(\bs D_{i,j} \bs e = \bs e\) as this means that the rate at which a change of phase from \(i\) to \(j\) of the QBD-RAP occurs is equal to 
\[\bs A(t+u) \bs D_{i,j}\bs e T_{ij} = T_{ij},\]
and therefore the distribution of time until a change from Phase~\(i\) to \(j\) is exponential. With this property we can show that the distribution of the phase process of the fluid queue and the distribution of the phase process of the QBD-RAP are the same.

Suppose that the first event after entering level \(k\) is a change of phase from \(i\in\mathcal P\) to \(j\in\mathcal P\) at time \(s\). From the discussion above, the orbit position which corresponds to the level process of the fluid queue being at \(y_k+x\) in Phase~\(j\in\mathcal P\) is 
\[\cfrac{\bs \alpha e^{\bs S|c_j| (x/|c_j|)}}{\bs \alpha e^{\bs S|c_j| (x/|c_j|)}\bs e} = \cfrac{\bs \alpha e^{\bs Sx}}{\bs \alpha e^{\bs S x}\bs e},\] 
which is independent of \(j\). Hence, upon a change of phase from \(i\in\mathcal P\) to \(j\in\mathcal P\), the orbit position does not need to change, i.e., \(\bs D_{i,j}=\bs I\), \(i,j\in\mathcal P\). 

By analogous arguments, the same choice \(\bs D_{i,j}=\bs I\) is reasonable for changes of phase from \(i\in\mathcal N\) to \(j\in\mathcal N\). 

Now suppose that the first event after entering level \(k\) is a change of phase from \(i\in\mathcal P\) to \(j\in\mathcal N\) at time \(s\). The orbit position immediately before \(s\) is \(\bs A(s^-)=\cfrac{\bs \alpha e^{\bs Sx}}{\bs \alpha e^{\bs Sx}\bs e}\) where \(x=|c_i|u\). We need to find a matrix \(\bs D_{i,j}\) such that the orbit position immediately after the jump, which is given by
\[
    \cfrac{\cfrac{\bs \alpha e^{\bs Sx}}{\bs \alpha e^{\bs Sx}\bs e}\bs D_{i,j}}{\cfrac{\bs \alpha e^{\bs Sx}}{\bs \alpha e^{\bs Sx}\bs e}\bs D_{i,j}\bs e} = \cfrac{\bs \alpha e^{\bs Sx}}{\bs \alpha e^{\bs Sx}\bs e}\bs D_{i,j},
\] 
corresponds to the fluid level being a distance of \(|c_i|u\) from \(y_k\) in Phase~\(j\in\mathcal N\), in some sense. 

Given the interpretation assigned to the orbit position, we would like our map to be such that 
\begin{align}
    \cfrac{\bs \alpha e^{\bs Sx}}{\bs \alpha e^{\bs Sx}\bs e}\bs D_{i,j} = \cfrac{\bs \alpha e^{\bs S(\Delta -x)}}{\bs \alpha e^{\bs S(\Delta -x)}\bs e},\label{eqn: orbit jump}
\end{align}
so that the orbit position for \(i\in \mathcal P\) maps directly to the corresponding orbit position for \(j\in\mathcal N\). 
However, we have had no success in finding such a matrix which also has the desired properties described above. Instead, we use
\begin{align}
	\bs D_{i,j} = \int_{y=0}^{\Delta-\varepsilon} e^{\bs Sy}\bs s \cfrac{\bs \alpha e^{\bs Sy}}{\bs \alpha e^{\bs Sy}\bs e}\wrt y + \int_{y=\Delta-\varepsilon}^\infty e^{\bs Sy}\bs s \cfrac{\bs \alpha e^{\bs S(\Delta-\varepsilon)}}{\bs\alpha e^{\bs S (\Delta-\varepsilon)}\bs e}\wrt y, \label{eqn: D instead}
\end{align}
which can be seen as an approximation to \eqref{eqn: orbit jump}. To see this, apply \(\bs D_{i,j}\) to \(\bs A(s^-)\) 
\begin{align*}
	\bs A(s^-)\bs D_{i,j} &= \cfrac{\bs \alpha e^{\bs Sx}}{\bs \alpha e^{\bs Sx}\bs e}\int_{y=0}^{\Delta-\varepsilon} e^{\bs Sy}\bs s \cfrac{\bs \alpha e^{\bs Sy}}{\bs \alpha e^{\bs Sy}\bs e}\wrt y + \cfrac{\bs \alpha e^{\bs Sx}}{\bs \alpha e^{\bs Sx}\bs e}\int_{y=\Delta-\varepsilon}^\infty e^{\bs Sy}\bs s \cfrac{\bs \alpha e^{\bs S(\Delta-\varepsilon)}}{\bs\alpha e^{\bs S (\Delta-\varepsilon)}\bs e}\wrt y.
\end{align*}
For \(2\varepsilon<x<\Delta-\varepsilon\), the first term is 
\[
	\int_{y=0}^{\Delta-\varepsilon} \cfrac{\bs \alpha e^{\bs Sx}}{\bs \alpha e^{\bs Sx}\bs e}e^{\bs Sy}\bs s \cfrac{\bs \alpha e^{\bs Sy}}{\bs \alpha e^{\bs Sy}\bs e}\wrt y \approx
	\int_{y=0}^{\Delta-\varepsilon} \delta(y-(\Delta-x)) \cfrac{\bs \alpha e^{\bs Sy}}{\bs \alpha e^{\bs Sy}\bs e}\wrt y=\cfrac{\bs \alpha e^{\bs S(\Delta-x)}}{\bs \alpha e^{\bs S(\Delta-x)}\bs e}.
\]
The second term is 
\begin{align*}
	\cfrac{\bs \alpha e^{\bs Sx}}{\bs \alpha e^{\bs Sx}\bs e}\int_{y=\Delta-\varepsilon}^\infty e^{\bs Sy}\bs s \cfrac{\bs \alpha e^{\bs S(\Delta-\varepsilon)}}{\bs\alpha e^{\bs S (\Delta-\varepsilon)}\bs e}\wrt y
	&=\cfrac{\bs \alpha e^{\bs Sx}}{\bs \alpha e^{\bs Sx}\bs e} e^{\bs S(\Delta-\varepsilon)}\bs e \cfrac{\bs \alpha e^{\bs S(\Delta-\varepsilon)}}{\bs\alpha e^{\bs S (\Delta-\varepsilon)}\bs e}
	\\&= \cfrac{\mathbb P(Z>x+\Delta-\varepsilon)}{\mathbb P(Z>x)}\cfrac{\bs \alpha e^{\bs S(\Delta-\varepsilon)}}{\mathbb P(Z>\Delta-\varepsilon)}.
\end{align*}
For any bounded integrable function \(g:[0,\infty)\to\mathbb R,\, |g(x)|\leq G\), 
\begin{align*}
	&\left|\int_{x=0}^\infty \cfrac{\mathbb P(Z>x+\Delta-\varepsilon)}{\mathbb P(Z>x)}\cfrac{\bs \alpha e^{\bs S(\Delta-\varepsilon)}}{\mathbb P(Z>\Delta-\varepsilon)}e^{\bs S x}\bs s g(x)\wrt x \right|
	\\&\leq\left|\int_{x=0}^\infty \cfrac{\var(Z)/\varepsilon^2}{\left(1-\var(Z)/\varepsilon^2\right)^2} \bs \alpha e^{\bs S(\Delta-\varepsilon)}e^{\bs S x}\bs s g(x)\wrt x \right|
	\\&\leq \cfrac{\var(Z)/\varepsilon^2}{\left(1-\var(Z)/\varepsilon^2\right)^2} G
\end{align*}
by Chebyshev's inequality, which is small when \(\var(Z)\) is small. In this sense, the second term is negligible, so \(\bs A(s^-)\bs D_{i,j}\approx \cfrac{\bs \alpha e^{\bs S(\Delta-x)}}{\bs \alpha e^{\bs S(\Delta-x)}\bs e}\). 

The matrix \(\bs D_{i,j}\) has the property that \((\bs a\bs D_{i,j},\bs S)\) is a valid representation of a matrix exponential distribution provided that \((\bs a, \bs S)\) is a valid representation of a matrix exponential distribution. By induction on \(N\) in Equation~\eqref{eqn: mathcal A set}, it can be shown that this property is a sufficient condition for \((\bs a, \bs S)\) to be valid matrix exponential representations for all \(\bs a \in \mathcal A\).  Moreover, it can also be shown that \(\bs D_{i,j}\) has the desired row-sum property.

Analogous arguments can be used to argue the same choice of \(\bs D_{i,j}\) is appropriate when the phase jumps from \(i\in\mathcal N\) to \(j\in\mathcal P\). Since \(\bs D_{i,j}\) in (\ref{eqn: D instead}) does not depend on \(i\) and \(j\), define \(\bs D=\bs D_{i,j}\) for all \(i,\,j\in\mathcal S\), \(\sign(c_i)\neq \sign(c_j)\). 

We have considered only the first change of phase from Phase~\(i\) to \(j\) with \(\sign(c_i)\neq \sign(c_j)\). Upon subsequent changes of phase between states with rates of opposite sign we claim that a reasonable choice for the jump matrix of the orbit process is again \(\bs D\). If a different jump matrix is specified the jump matrix depends on the number of transitions from states with rates of opposite sign, which enlarges the state space of the QBD-RAP as the state needs to contain information about the number of transitions from states with rates of opposite sign, and therefore makes the approximation more complex and less practical.

\subsubsection{On a change of level}\label{subsubsec: change of level}
On the event that there is a change of level at time \(s\) and given the phase is \(i\in\mathcal P\) the orbit is set to \(\bs A(s)=\bs \alpha\) as this orbit value corresponds to the level process \(\{X(t)\}_{t\geq0}\) being at \(y_{k+1}\) when the level is \(L(s)=k+1\). Similarly, on the event that there is a change of level at time \(s\) and given the phase is \(i\in\mathcal N\) the orbit is set to \(\bs A(s)=\bs \alpha\) as this orbit value corresponds to the level process \(\{X(t)\}_{t\geq0}\) being at \(y_{k}\) when the level is \(L(s)=k-1\).

\subsection{Boundary conditions}
Often boundary conditions are imposed on fluid queues directly. Other times, to enable computation for approximations, the fluid queue is truncated and boundary conditions are imposed. Here, we describe how to approximate regulated boundary conditions only, as this simplifies notation. More generally, it is possible to consider other boundary conditions \cite{a2023}. 

With the construction above, the event that the fluid level \(\{X(t)\}\) hits the boundary at \(0\) is approximated by the event that the QBD-RAP is in Level \(1\) in a Phase~\(i\in\mathcal N\) and there is a change of level. Upon hitting the boundary, the phase dynamics of the fluid queue can be captured without approximation. At the lower boundary the phase dynamics of the fluid queue are those of a transient CTMC with sub-generator \(\bs T_{--}\). Upon leaving the boundary the phase transitions according to \(\bs T_{-+}\) and the orbit process for the approximation is set to \(\bs \alpha\).

Analogous dynamics describe the approximation at the upper boundary.

\subsection{QBD-RAP generator}
The QBD-RAP construction can be summarised by writing down its generator matrix which is as follows.
\begin{align}
	\bs B &= \left[\begin{array}{ccccccc}
	\check{\bs{B}}_{0} & \check{\bs{B}}_{+} & & &&&\\
    \check{\bs{B}}_{-} & {\bs{B}}_{0} & {\bs{B}}_{+} &&&& \\
    & {\bs{B}}_{-} & {\bs{B}}_{0} & {\bs{B}}_{+}&&& \\
    & & \ddots & \ddots &\ddots &&\\
    &&& {\bs{B}}_{-} & {\bs{B}}_{0} & {\bs{B}}_{+}& \\
    &&&& {\bs{B}}_{-} & {\bs{B}}_{0} & \hat{\bs{B}}_{+}\\
    &&&&& \hat{\bs{B}}_{-} & \hat{\bs{B}}_{0} \\
	\end{array}\right],\label{eqn: BBBBbndry}
\end{align}
where 
\begin{align}
	\check{\bs B}_0 &= \bs T_{--},\quad
	\bs B_0 = \left[\begin{array}{cc} \bs B_{++} & \bs B_{+-} \\ \bs B_{-+} & \bs B_{--}\end{array}\right],\quad
    \hat{\bs B}_0 = \bs T_{++} \label{eqn: BBBB2a},
\end{align}
\begin{align}
	\check{\bs{B}}_{-} &= \left[\begin{array}{c}
		\bs 0 \\
		\bs{C}_-\otimes \bs s \\ 
		\end{array}\right],\quad
		\bs{B}_{-} = \left[\begin{array}{cc}
			\bs 0 & \bs 0 \\
			\bs 0 & \bs{C}_-\otimes (\bs s\bs\alpha) 
		\end{array}\right],
		\quad
        \hat{\bs{B}}_{-} = \left[\begin{array}{cc}
			\bs 0 & \bs T_{+-} \otimes\bs \alpha 
		\end{array}\right],
\end{align}
\begin{align}
	\check{\bs{B}}_{+} &= \left[\begin{array}{cc}
		\bs T_{-+} \otimes\bs \alpha & \bs 0
    \end{array}\right],\quad
	\bs{B}_{+} = \left[\begin{array}{cc}
		\bs{C}_+\otimes (\bs s\bs\alpha) & \bs 0 \\
        \bs 0 & \bs 0 
    \end{array}\right],\quad
	\label{eqn: BBBB2c}
    \hat{\bs{B}}_{+} = \left[\begin{array}{c}
        \bs{C}_+\otimes \bs s \\
		\bs 0 
		\end{array}\right],
\end{align}
and
\begin{align}
	\bs{B}_{++}  &= \bs{C}_+\otimes \bs{S}  + \bs{T}_{++}\otimes \bs{I},\quad
	\bs{B}_{+-}  =  \bs{T}_{+-}\otimes \bs{D}  ,
	\\\bs{B}_{-+}  &=  \bs{T}_{-+}\otimes \bs{D} ,\quad
	\bs{B}_{--}  = \bs{C}_-\otimes \bs{S}  + \bs{T}_{--}\otimes \bs{I}.\label{eqn: BBBB3}
\end{align}
The tri-diagonal structure arises from the skip-free nature of the level process.

The domain of \(\bs B\) is the set of vectors in \(\mathbb R^{(K+1)\times|S|\times p}\) where \(p\) is the order of the distribution of \(Z\). If we also attach the supremum norm, \(||\cdot||_{\infty}\), to this space, then we have a Banach space. Denote by \(\bs T(t)=e^{\bs B t}\) the transition semigroup of the QBD-RAP.

\subsection{Initial conditions}
Let \(\bs e_{k,i,n}\) denote the vector of zeros except a one in position 
\[n + (i-1)p - 1(k=0)|\mathcal P|p + 1(k>0)|\mathcal N|p+k|\mathcal S|p,\]
where 
\((k,i,n)\in\{(0,i,1)|i\in\mathcal N\}\cup \{(k,i,n)|k=1,...,K,i\in\mathcal S, n=1,...,p\}\cup \{(K+1,i,1)|i\in\mathcal P\}.\) The position of the one in the vector \(\bs e_{k,i,n}\) corresponds to basis function \(n\) in Phase \(i\) and Level \(k\) of the approximation. We approximate the initial condition \(X(0)=x_0,\,\rapphase(0)=i\) with the initial vector \(\bs e_{k,i,n}\tr{}\) for the QBD-RAP, where \(k,n\) are determined by \(X(0)=x_0,\,\rapphase(0)=i\) as follows.
\begin{itemize}
	\item If \(x_0=0\), \(i\in\mathcal N\), then \(k=0,n=1\),
	\item if \(x_0=b\), \(i\in\mathcal P\), then \(k=K+1,n=1\), 
	\item if \(x_0\in[y_\ell+t_m, y_\ell+t_{m+1}),\, j\in\mathcal P\), then \(k=\ell\) and \(n=m\),
	\item if \(x_0\in(y_{\ell+1}-t_{m+1}, y_{\ell+1}-t_{m}],\, j\in\mathcal N\), then \(k=\ell\) and \(n=m\).
\end{itemize}

The specification of the indices \(k\) and \(i\) in \(\bs e_{k,i,n}\) is obvious as there is a clear mapping between the level and phase of the fluid queue and QBD-RAP approximation. The specification of the index \(n\) relates to the particular choice of representation \((\bs \alpha, \bs S)\) of the ME used in the construction. Recall that we choose the representation such that \(\bs e_n\tr{} e^{\bs S t} \bs s=\mathbb P(Z-t_n>t \mid Z\geq t_n)\) and hence 
\[
	\cfrac{\bs \alpha e^{\bs S t_n}}{\bs \alpha e^{\bs S t_n}\bs e} =\bs e_n.
\]
Hence, the vector \(\bs e_n\) corresponds to the orbit position \(y_{k} + t_n\) for \(i\in\mathcal P\) and \(y_{k+1} - t_n\) for \(i\in\mathcal N\) and the initial condition above simply maps \(x_0\) to a nearby \(y_{k} + t_n\) if \(i\in\mathcal P\), or  \(y_{k+1} - t_n\) if \(i\in\mathcal N\), then uses the initial vector of the corresponding orbit at that position, \(\bs e_n\). 

\section{The closing operator}\label{sec:closing operator}
The level process \(\{L(t)\}\) of the QBD-RAP approximates the process 
\begin{align}
	\displaystyle \left\{\sum_{k\in\{0,...,K+1\}} k1(X(t)\in\calD_{k,\varphi(t)})\right\}_{t\geq 0}. \label{eqn: kfafj}
\end{align}
This may be of interest in its own right. However, for some applications, this approximation may be too coarse---the level process tells us nothing about where in the intervals \(\calD_{k,i}\) the fluid level is. The purpose of this section is to introduce the idea of \emph{closing operators}, which are used to extract intra-level approximations to the distribution of \(X(t)\) using the value of the orbit \(\bs A(t)\). The intra-level approximation obtained in this way does not affect the dynamics of the QBD-RAP itself; we only take the information contained in \(\mathbb E[\bs A(t)1(L(t)=\ell,\rapphase(t)=i)]\) and write it in a descriptive way.

Suppose that at time \(t\) the QBD-RAP is in level \(L(t)=\ell\), Phase~\(\rapphase(t)=j\in\mathcal P\), and the orbit is \(\bs A(t)=\bs a\). If the QBD-RAP remains in Phase~\(j\), the QBD-RAP approximation will transition out of Level \(\ell\) in the infinitesimal time interval \([t+u,t+u+\wrt u]\) with probability \(\bs a e^{|c_j|\bs{S}u}|c_j|\bs s\wrt u\). At the time of the change of level we estimate the position of \(X(t+u)\) by \(X(t+u)\approx y_{\ell+1}\). Tracing this back to time \(t\), we estimate the position of \(X(t)\) as \(X(t)\approx y_{\ell+1} - |c_j|u\). Hence, the approximation to the probability of \(X(t) \in\wrt x\) is \(\bs a e^{|c_j|\bs S(y_{\ell+1}-x)/|c_j|}|c_j|\bs s \wrt x/|c_j|\), since \(\wrt x = |c_j|\wrt u\) where \(\wrt x\) is an infinitesimal in space and \(\wrt u\) is an infinitesimal with respect to time. An analogous intuition applies to \(\rapphase(t)=j\in\mathcal N\). 

The reasoning above leads to an approximation of the distribution of the fluid at time \(t\), 
\[\mathbb P(\bs X(t)\in{}(E,j) \mid \bs X(0)=(x_0,i)),\]
for measurable sets \(E\subseteq\mathcal D_{\ell,j}\), as
\begin{align}
		&\int_{x\in E}\int_{\bs a \in\mathcal A}\mathbb P((L(t),\bs A(t),\rapphase(t)) \in{}(\ell, \wrt \bs a, j)\mid (L(0),\bs A(0),\rapphase(0))=\bs y_0)\bs a e^{\bs{S}z_{\ell,j}(x)} \bs s \wrt x\nonumber\\
        &=\mathbb E[\bs A(t)1(L(t)=\ell,\rapphase(t)=j)] \int_{x\in E}e^{\bs{S}z_{\ell,j}(x)} \bs s \wrt x,
		\label{eqn: density approx aug}
\end{align}
where \(x_0\in\mathcal D_{\ell_0,i}\), \(\bs y_0 = (\ell_0,\bs e_{n}\tr{},i)\), where \(n\in\{1,...,p\}\) is such that \(x_0\in[y_\ell+t_n, y_\ell+t_{n+1})\) if \(i\in\mathcal P\) or \(x_0\in(y_{\ell+1}-t_{n+1}, y_{\ell+1}-t_{n}]\) if \(i\in\mathcal N\), and 
\[
	z_{\ell,j}(x) = \begin{cases} 
		x - y_\ell, & j\in\mathcal N, \\
		y_{\ell+1} - x,& j\in\mathcal P.
	\end{cases}
\]
The estimate (\ref{eqn: density approx aug}) is defective -- it does not integrate to 1 for any finite-order matrix exponential distribution. 

To rectify the defectiveness we instead approximate \(\mathbb P(\bs X(t)\in{}(\wrt x, j)\mid \bs X(0)=(x_0, i)) \) by
\begin{align}
		&\displaystyle\int_{\bs a \in\mathcal A}\mathbb P((L(t),\bs A(t),\rapphase(t)) \in{}(\ell, \wrt \bs a,j)\mid (L(0),\bs A(0),\rapphase(0)) = \bs y_0)\bs a \bs v_{\ell,j}(x) \wrt x \nonumber\\
    &=\mathbb E[\bs A(t)1(L(t)=\ell,\rapphase(t)=j)] \int_{x\in E}\bs v_{\ell,j}(x) \wrt x\label{eqn: density approx 4+}
\end{align}
{where}
\begin{align}
		 \bs v_{\ell,j}(x) = \begin{cases}
			 \left(e^{ \bs{S}(y_{\ell+1}-x)} + e^{ \bs{S}(2\Delta-(y_{\ell+1}-x))} \right)\left[I-e^{\bs S 2\Delta}\right]^{-1}\bs s, & j\in\mathcal P, \\ 
			 \left(e^{ \bs{S}(x-y_\ell)}+e^{ \bs{S}(2\Delta-(x-y_\ell))}\right) \left[I-e^{\bs S 2\Delta}\right]^{-1}\bs s, & j\in\mathcal N. \\ 
		\end{cases}
\end{align}
Here, we take the density function \(\bs a e^{\bs S x}\bs s\), defined on \(x\in[0,\infty)\), and map it to a density function on \(w\in[0,\Delta)\) by setting \(w=\Delta - |(x \,\mathrm{mod }\, 2\Delta )-\Delta|\). The approximation (\ref{eqn: density approx 4+}) leads to an approximation which converges and, due to the interpretation as probability density, ensures the approximation has probabilistic properties.

We now formulate the above in an operator-theoretic framework and define its properties. Let \(f\in\mathcal L\), \(f:[0,b]\times \mathcal S\to \mathbb R\). Define the \textit{closing operator} for the QBD-RAP scheme as the operator \(\bs V\) structured as the vector of operators
\begin{align*}
	\bs Vf&=\left[\begin{array}{l}\vligne{\bs V_{0,j}f}_{j\in\mathcal N} \\ \vligne{\bs V_{\ell,j}f}_{\ell=1,...,K,\,j\in\mathcal S} \\  \vligne{\bs V_{K+1,j}f}_{j\in\mathcal P} \end{array}\right],
\end{align*}
where 
\begin{align*}
	\bs V_{\ell,j}f=\int_{\mathcal D_{\ell,j}}\bs v_{\ell,j}(x)f(x,j)\wrt x,\, {\ell=1,...,K,\,j\in\mathcal S},
\end{align*}
\begin{align*}
	\bs V_{0,j}f=f(0,j),\,{j\in\mathcal N},\quad  \mbox{ and, }\quad
	\bs V_{K+1,j}f=f(b,j), \, {j\in\mathcal P}.
\end{align*}
The closing operator is important because it maps from \(\mathcal L\), the domain of the transition semigroup of the fluid queue, to \(\mathbb R^{(K+1)\times|S|\times p}\), the domain of the transition semigroup of the QBD-RAP.

\begin{remark}
	An alternative interpretation of the orbit process-closing vector construction is as a measure-valued process, \(\left\{\bs A(t)\bs V_{L(t),\phi(t)}\right\}\), a perspective that may connect the construction to the broader theory of measure-valued Markov processes; we do not pursue this further here.
\end{remark}

Observe that \(\bs V\) is a linear operator, \(\bs V(f+h)=\bs Vf + \bs Vh\), and is bounded, \(||\bs Vf||_{\infty} \leq ||f||_{\infty}\), since, if \(||f||_{\infty}= F\) then 
\[\displaystyle||\bs V_{\ell,j}f||_{\infty}=\max_{n=1,...,p}\left|\int_{\mathcal D_{\ell,j}}\bs e_n\tr{}\bs v_{\ell,j}(x)f(x,j)\wrt x\right|\leq \max_{n=1,...,p}\int_{\mathcal D_{\ell,j}}\left|\bs e_n\tr{}\bs v_{\ell,j}(x)\right|F\wrt x=F.\]
Hence, \(\bs V\) is a bounded linear operator from \(\mathcal L\to\mathbb R^{(K+1)\times|S|\times p}\). Using the notation of semigroups the approximation of \(T(t)f\) is \(\bs T(t)\bs Vf\).

From the definition of the closing operator, for \(j\in\mathcal P\),
\begin{align}
	\bs V_{\ell,j}f &= \int_{\mathcal D_{\ell,j}}\bs v_{\ell,j}(x)f(x,j)\wrt x \nonumber
	%
	%
	\\&=\displaystyle \int_{\mathcal D_{\ell,j}}e^{ \bs{S}(y_{\ell+1}-x)}\left[I-e^{\bs S 2\Delta}\right]^{-1}\bs sf(x,j)\wrt x + \int_{\mathcal D_{\ell,j}}e^{ \bs{S}(2\Delta-(y_{\ell+1}-x))} \left[I-e^{\bs S 2\Delta}\right]^{-1}\bs sf(x,j)\wrt x\nonumber 
	\\&=\displaystyle \sum_{k=0}^\infty\int_{y_\ell}^{y_{\ell+1}} e^{ \bs{S}(k2\Delta + y_{\ell+1}-x)} \bs sf(x,j)\wrt x + \sum_{k=0}^\infty \int_{y_\ell}^{y_{\ell+1}} e^{ \bs{S}((k+1)2\Delta-(y_{\ell+1}-x))} \bs sf(x,j)\wrt x,\label{eqn: geom}
\end{align}
where the last equality is due to the geometric series identity, \(\left[I-e^{\bs S 2\Delta}\right]^{-1} = \sum\limits_{k=0}^\infty  e^{ \bs{S}k2\Delta}\) and hence, \(e^{ \bs{S}x}\left[I-e^{\bs S 2\Delta}\right]^{-1} = e^{ \bs{S}x}\sum\limits_{k=0}^\infty  e^{ \bs{S}k2\Delta}=\sum\limits_{k=0}^\infty  e^{\bs{S}(x + k2\Delta)}\). By change of variables \eqref{eqn: geom} is equal to 
\begin{align*}
	&\displaystyle \sum_{k=0}^\infty  \int_{k2\Delta+\Delta}^{k2\Delta}e^{ \bs{S}x} \bs sf(k2\Delta+y_{\ell+1}-x,j)(-1)\wrt x 
 + \sum_{k=0}^\infty\int_{k2\Delta+\Delta}^{(k+1)2\Delta} e^{ \bs{S}x} \bs sf(x-(k+1)2\Delta+y_{\ell+1},j)\wrt x
	\\&=\displaystyle \sum_{k=0}^\infty  \int_{k2\Delta}^{(k+1)2\Delta} e^{ \bs{S}x} \bs sf(y_{\ell} + |x \,\mathrm{mod }\, 2\Delta - \Delta|,j)\wrt x
	\\&= \displaystyle\int_{[0,\infty)} e^{ \bs{S}x} \bs sf(y_{\ell} + |x \,\mathrm{mod }\, 2\Delta - \Delta|,j)\wrt x.
\end{align*}
Similarly, for \(j\in\mathcal N\) it can be shown that 
\begin{align*}
	\bs V_{\ell,j}f
	= \displaystyle\int_{[0,\infty)} e^{ \bs{S}x} \bs sf(y_{\ell+1} - |x \,\mathrm{mod }\, 2\Delta - \Delta|,j)\wrt x.
\end{align*}
Hence,
\begin{align*}
	&\bs e_n\tr{}\bs V_{\ell,j}f
	%
	= \begin{cases}
		\mathbb E\left[f(y_{\ell} + |(Z-t_n \,\mathrm{mod }\, 2\Delta )-\Delta|,j)\mid Z\geq t_n\right], & j\in\mathcal P,
		\\\mathbb E\left[f(y_{\ell+1} - |(Z-t_n \,\mathrm{mod }\, 2\Delta )-\Delta|,j)\mid Z\geq t_n\right], & j\in\mathcal N.
	\end{cases}
\end{align*}
Intuitively, assuming that the variance of \(Z\) is small and therefore \(Z\approx \Delta\), then 
\begin{align*}
	\bs e_n\tr{}\bs V_{\ell,j}f
	&\approx \begin{cases}
		f(y_{\ell} + t_n,j), & j\in\mathcal P,
		\\f(y_{\ell+1} - t_n,j), & j\in\mathcal N.
	\end{cases}
\end{align*}
With this intuition \(\bs Vf\) defines a simple function, denoted by \(f_p(x,j)=\bs Vf(x,j)\), which approximates \(f\),
\begin{align*}
	f_p(x, j) &
	\approx \begin{cases}
		f(0,j), & j\in\mathcal N, x=0,
		\\f(y_{\ell}+t_n,j), & j\in\mathcal P,\,x\in \left[y_{\ell}+t_n,y_{\ell}+t_{n+1}\right),\, n=1,...,p,
		\\ f(y_{\ell+1}-t_n,j),& j\in\mathcal N,\,x\in \left(y_{\ell+1}-t_{n+1},y_{\ell+1}-t_{n}\right],\, n=1,...,p,
		\\f(b,j), & j\in\mathcal P, x=b.
	\end{cases}
\end{align*}
In the appendix, we establish bounds for this approximation and other related bounds, which we later use to prove convergence of the QBD-RAP to the fluid queue. One of the main consequences of the results in the appendix is the following.
\begin{corollary}\label{eqn: fns converge}
	Let \(f:[0,b]\times \mathcal S \to \mathbb R\) be bounded and Lipschitz continuous, \(|f(x,j)|\leq G\) and \(|f(x,j)-f(y,j)|\leq L|x-y|\). Further, let \(\bs V^{(p)},\,p=1,2,...\) be a sequence of closing operators constructed from matrix exponential random variables \(Z^{(p)}\) with mean \(\Delta\) and variance \(Var(Z^{(p)})\to 0\), and define \(f_p(x,j)=\bs V^{(p)}f(x,j)\), then 
	\begin{align*}
		f_p(x, j) \to f(x,j)
	\end{align*}
	uniformly as \(p\to\infty\). 
\end{corollary}
\begin{proof}
	The result is a consequence of the bound in Corollary~\ref{cor: Vf approximates f} in the appendix, upon the construction of a sequence of matrix exponential random variables \(\{Z^{(p)}\}_{p}\) (and the required representations of those random variables) with mean \(\Delta\) and variance \(Var(Z^{(p)})\to 0\).
\end{proof}

\section{Distributional results}
\label{sec:Convergence of the QBD-RAP approximation}
In this section we provide convergence and distributional results about the QBD-RAP approximation. First we show that, under certain conditions, the phase process of the QBD-RAP and the fluid queue have the same distribution. Next, our main result shows a weak convergence of the distribution of the QBD-RAP approximation to the distribution of the fluid queue, via convergence of generators.

\subsection{The distribution of the phase process}
Let \(\tau_{0}=0\) and denote the time of the \(n\)th level change of the QBD-RAP by 
\[
    \tau_n=\inf\left\{t>\tau_{n-1}|L(t)\neq L(\tau_{n-1})\right\}.
\]

The following result states that the phase process of the QBD-RAP, \(\left\{{\rapphase}(t)\right\}\), has the same distribution as the phase process of the fluid queue, \(\left\{{\varphi}(t)\right\}\), when the level processes are unbounded. The condition that the level processes are unbounded can be relaxed to the condition that the phase does not depend on the level process (e.g., a regulated boundary), however the notation in the proof is simpler in the unbounded case.
\begin{theorem}\label{thm: 1}
	Let \(\Theta_k,\,k=1,2,...\), be the time of the \(k\)th jump of the phase process, \(\left\{{\rapphase}(t)\right\}\), of an (unbounded) QBD-RAP approximation to a fluid queue. For any valid initial orbit \(\bs a\in\mathcal A\) and phase \(\rapphase(0)=i\), \(\Theta_k-\Theta_{k-1}\) is exponentially distributed with rate \(|T_{jj}|\), \(j=\rapphase(\Theta_{k-1})\) and, at time \(\Theta_k\), the phase jumps to \(i\neq j\) with probability \(T_{ij}/|T_{jj}|\). Hence, \(\{\rapphase(t)\}\) and \(\{\varphi(t)\}\) have the same probability law, where \(\left\{{\varphi}(t)\right\}\) is the phase process of the fluid queue. 
\end{theorem}
\begin{proof}
	Consider the sequence of hitting times \(\{\tau_n\}_{n\geq 0}\) where \(\tau_0=0\) and \(\tau_n\) is the time of the \(n\)-th change of level of the QBD-RAP and suppose we partition \(\left\{\Theta_1> t \right\}\) with respect to \(\left\{\tau_{n-1}< t \leq\tau_n\right\},\) \(n=1,2,\dots\). Let \(t_0=0\) and \(U_n=\left\{\Theta_1> t \right\}\cap \left\{\tau_{n-1}< t \leq\tau_n\right\},\) \(n=1,2,...\). Using Equation~(\ref{eqn: rapproperty}) and Theorem~\ref{thm:qbdrapchar}, by partitioning on the times of the first \(n-1\) level changes, \(\tau_1,\dots,\tau_{n-1}\), we get 
	\begin{align}
		\nonumber &\mathbb P(U_n \mid \bs A(0) = \bs a, \rapphase(0)=i) 
		\\\nonumber &= \int_{t_1=t_0}^t\hdots\int_{t_{n-1}=t_{n-2}}^t \mathbb P(U_n, \tau_{n-1}\in{}\wrt t_{n-1},\dots, \tau_1\in\wrt t_1 \mid \bs A(0) = \bs a, \rapphase(0)=i)
		\\\nonumber &=\displaystyle{\int_{t_1=t_0}^t\hdots\int_{t_{n-1}=t_{n-2}}^t} \bs a e^{(|c_i|\bs{S}+T_{ii}\bs{I}_p)t_1}|c_i|\bs{s}
			\left(\prod_{k=2}^{n-1} \bs \alpha e^{(|c_i|\bs{S}+T_{ii}\bs{I}_p)(t_k-t_{k-1})} |c_i|\bs{s}\right) 
			\\&\quad\displaystyle{\bs \alpha e^{(|c_i|\bs{S}+T_{ii}\bs{I}_p)(t-t_{n-1})}\bs e}\wrt t_{n-1}\wrt t_{n-2}\hdots\wrt t_{1}. \label{eqn:lamnvczb}
	\end{align}
	{Since \(T_{ii}\bs{I}_p\) commutes with \(|c_i|\bs{S}\), \(e^{T_{ii}t_k},\,k=1,...,n-1\) are scalars, and since \(t_1+(t_2-t_1)+\hdots+(t_{n-1}-t_{n-2})+(t-t_{n-1})=t\), (\ref{eqn:lamnvczb}) is equal to}
	\begin{align*}
		& \displaystyle{\int_{t_1=0}^t\hdots\int_{t_{n-1}=t_{n-2}}^t} \bs a e^{|c_i|\bs{S}t_1}|c_i|\bs{s} 
			\left(\prod_{k=2}^{n-1} \bs \alpha e^{|c_i|\bs{S}(t_k-t_{k-1})} |c_i|\bs{s}\right) \bs \alpha e^{|c_i|\bs{S}(t-t_{n-1})}\bs e
			e^{T_{ii}t} \wrt t_{n-1}\hdots\wrt t_{1}
		\\&= \mathbb P(\tau_{n-1}< t \leq \tau_n\mid \bs A(0) = \bs a, \rapphase(0)=i)\displaystyle{  e^{T_{ii}t} }.
	\end{align*}
	Hence, by the law of total probability, 
	\begin{align}
		\mathbb P(\Theta_1> t\mid \bs A(0) = \bs a, \rapphase(0)=i)  \nonumber
		&= \sum_{n=1}^\infty \mathbb P(U_n\mid \bs A(0) = \bs a, \rapphase(0)=i)\nonumber
		\\&= \sum_{n=1}^\infty \mathbb P(\tau_{n-1} < t \leq \tau_n\mid \bs A(0) = \bs a, \rapphase(0)=i)\displaystyle{  e^{T_{ii}t} }\nonumber
		\\&= e^{T_{ii}t},\label{eqn: dist Theta_i}
	\end{align}
	and therefore \(\Theta_1\) has an exponential distribution with rate \(|T_{ii}|\). 
    
    Upon leaving state \(i\) at time \(\Theta_1\), \(\{\rapphase(t)\}\) transitions to state \(j\neq i\) with probability 
	\begin{align*}
            	\cfrac{\left(\cfrac{\bs A(t) \bs{D}^{1(\sign(c_i)\neq \sign(c_j))}T_{ij}\bs e}{\displaystyle\sum_{\substack{j\in\mathcal S\\j\neq i}}\bs A(t) \bs{D}^{1(\sign(c_i)\neq \sign(c_j))}T_{ij}\bs e + \bs A(t) |c_i|\bs{s}}\right)}
            	{\left(\cfrac{\displaystyle\sum_{\substack{j\in\mathcal S\\j\neq i}}\bs A(t) \bs{D}^{1(\sign(c_i)\neq \sign(c_j))}T_{ij}\bs e}{\displaystyle\sum_{\substack{j\in\mathcal S\\j\neq i}}\bs A(t) \bs{D}^{1(\sign(c_i)\neq \sign(c_j))}T_{ij}\bs e + \bs A(t) |c_i|\bs{s}}\right)}
            	&=\cfrac{\bs A(t) \bs eT_{ij}}
            	{\displaystyle\sum_{\substack{j\in\mathcal S\\j\neq i}}\bs A(t) \bs eT_{ij}}
	= \cfrac{T_{ij}}
            	{-T_{ii}}.
	 \end{align*}
	
	Now let \(N_L(t)\) be the counting process which counts the number of transitions of \(\{\rapphase(t)\}\) up to time \(t\). For any stopping time \(\sigma\) consider 
    \begin{align}
		&\mathbb P(\Theta_{N_L(\sigma)+1}-\sigma > s,\mid \{\bs A(u), \rapphase(u),\, u\in[0,\sigma]\}) \nonumber
    \end{align} 
    where \(\{\bs A(u), \rapphase(u),\, u\in[0,\sigma]\}\) is the entire history of the QBD-RAP up to the stopping time \(\sigma\). From the strong Markov property
    \begin{align}
		&\mathbb P(\Theta_{N_L(\sigma)+1}-\sigma > s,\mid \{\bs A(u), \rapphase(u),\, u\in[0,\sigma]\}) \nonumber
        =\mathbb P(\Theta_{N_L(\sigma)+1}-\sigma > s,\mid \bs A(\sigma), \rapphase(\sigma)), \nonumber
    \end{align}
    and since the QBD-RAP process is time-homogeneous 
    \begin{align}
		\mathbb P(\Theta_{N_L(\sigma)+1}-\sigma > s\mid \bs A(\sigma), \rapphase(\sigma)) \nonumber
        &=\mathbb P(\Theta_{1}> s\mid \bs A(0), \rapphase(0))
    \end{align}
    which is the left-hand side of Equation~(\ref{eqn: dist Theta_i}). Hence, using the same arguments, \(\Theta_{N_L(\sigma)}-\sigma\) given \(\{\bs A(u), \rapphase(u),\, u\in[0,\sigma]\}\) is also exponentially distributed with rate \(|T_{ii}|,\,i=\rapphase(\sigma)\) and upon a change of phase at time \(\Theta_{N_L(\sigma)+1}\) the new phase is \(j\) with probability \(\cfrac{T_{ij}}{-T_{ii}}\). This is precisely the law of the process \(\left\{\varphi(t)\right\}\) and hence we have shown that the processes \(\left\{{\rapphase}(t)\right\}\) and \(\left\{\varphi(t)\right\}\) have the same probability law.
\end{proof}

\subsection{Weak convergence}
In this section we prove convergence results of the QBD-RAP scheme to ultimately show that the scheme converges in a weak sense. Specifically, we show that the generator of the QBD-RAP converges to the generator of the fluid queue. By Theorem~6.1~in~\cite{ethierkurtz}, this implies that the transition semigroup of the QBD-RAP converges to the transition semigroup of the fluid queue, and hence that the QBD-RAP approximation converges in distribution to the fluid queue.

\begin{lemma}[The generators are close to each other at a collection of points]\label{lem: generators are close}
	Let \(f:[0,b]\times \mathcal S\to\mathbb R\) be any function with bounded and Lipschitz continuous derivative, \(|f_x(x,j)| \leq G'\) and \(|f_x(x,j)-f_x(y,j)|\leq L'|x-y|\). Then
	\[||\bs e_{k,i,n}\tr{}\bs B \bs V f - Bf(y_k + t_n,i)||_\infty \leq M(1+\bs e_n\tr{} \bs s)\left({\var(Z)^{1/3}}/\left(1-\var(Z)^{1/3}\right)\right),\quad i\in\mathcal P,\]
	and
	\[||\bs e_{k,i,n}\tr{}\bs B \bs V f - Bf(y_{k+1}-t_n,i)||_\infty \leq M(1+\bs e_n\tr{} \bs s)\left({\var(Z)^{1/3}}/\left(1-\var(Z)^{1/3}\right)\right),\quad i\in\mathcal N,\]
	where \(M<\infty\) depends on \(G'\) and \(L'\) but is independent of \(k,\,i,\) and \(n\), and \(\bs e_n\tr{}\bs s\geq 0\).
\end{lemma}
\begin{proof}
	Computing \(\bs e_{k,i,n}\tr{}\bs B \bs V f\) gives
	\begin{enumerate}[label=Case \arabic*:]
		\item for \(k=0,\, i\in\mathcal N,\, n=1,\)
		\begin{align}
		\bs e_{0,i,1}\tr{}\bs B \bs V f &= \displaystyle\sum_{j\in\mathcal N}T_{ij}\bs V_{0,j}f + \sum_{j\in\mathcal P}T_{ij}\bs\alpha\bs V_{1,j}f. \label{eqn: first}
		\end{align}
		\item for \(k=1,\,i\in\mathcal N, \, n=1,...,p,\)
		\begin{align}
		\bs e_{1,i,n}\tr{}\bs B \bs V f &=
			\displaystyle |c_i|\bs e_n\tr{}\bs s\bs V_{0,i}f + |c_i|\bs e_n\tr{}\bs S\bs V_{1,i}f + \sum_{j\in\mathcal P}T_{ij}\bs e_n\tr{}\bs D\bs V_{1,j}f + \sum_{j\in\mathcal N}T_{ij}\bs e_n\tr{}\bs V_{1,j}f.
		\end{align}
		\item for \(k=1,...,K-1,\, i\in\mathcal P, \, n=1,...,p,\)
		\begin{align}
		\bs e_{k,i,n}\tr{}\bs B \bs V f &=
			\displaystyle |c_i|\bs e_n\tr{}\bs s\bs \alpha\bs V_{k+1,i}f + |c_i|\bs e_n\tr{}\bs S\bs V_{k,i}f + \sum_{j\in\mathcal P}T_{ij}\bs e_n\tr{}\bs V_{k,j}f + \sum_{j\in\mathcal N}T_{ij}\bs e_n\tr{}\bs D\bs V_{k,j}f.
		\end{align}
		\item for \(k=2,...,K,\, i\in\mathcal N, \, n=1,...,p,\)
		\begin{align}
		\bs e_{k,i,n}\tr{}\bs B \bs V f &=
			\displaystyle |c_i|\bs e_n\tr{}\bs s\bs \alpha\bs V_{k-1,i}f + |c_i|\bs e_n\tr{}\bs S\bs V_{k,i}f + \sum_{j\in\mathcal P}T_{ij}\bs e_n\tr{}\bs D\bs V_{k,j}f + \sum_{j\in\mathcal N}T_{ij}\bs e_n\tr{}\bs V_{k,j}f.
		\end{align}
		\item for \(k=K,\, i\in\mathcal P, \, n=1,...,p,\)
		\begin{align}
		\bs e_{K,i,n}\tr{}\bs B \bs V f &=
			\displaystyle |c_i|\bs e_n\tr{}\bs s\bs V_{K+1,i}f + |c_i|\bs e_n\tr{}\bs S\bs V_{K,i}f + \sum_{j\in\mathcal P}T_{ij}\bs e_n\tr{}\bs V_{K,j}f + \sum_{j\in\mathcal N}T_{ij}\bs e_n\tr{}\bs D\bs V_{K,j}f.
		\end{align}
		\item for \(k=K+1,\, i\in\mathcal P,\,n=1,\)
		\begin{align}
		\bs e_{K+1,i,1}\tr{}\bs B \bs V f &=
			\displaystyle \sum_{j\in\mathcal P}T_{ij}\bs V_{K+1,j}f + \sum_{j\in\mathcal N}T_{ij}\bs\alpha\bs V_{K,j}f.\label{eqn: last}
		\end{align}
\end{enumerate}
	Now, we write each of the terms on the right-hand sides of \eqref{eqn: first}-\eqref{eqn: last} above in terms of \(f\) plus an error term, utilising the results in the appendix to provide the error terms.
	By definition, 
	\begin{align}\bs V_{0,j}f = f(0,j),\quad \bs V_{K+1,j}f = f(b,j). \label{eqn: first2}\end{align}
	Since \(f\) has a bounded domain and bounded and Lipschitz continuous derivatives, \(f\) is also bounded and Lipschitz continuous. Hence, we can apply the results from the appendix.
	By Lemma~\ref{lem: cv evaluates g},
	\begin{align}
		\bs e_n\tr{}\bs V_{k,j}f=\begin{cases}
		f(y_k+t_n,j) + r_{k,j,n}^1, &j\in\mathcal P,
		\\f(y_{k+1}-t_n,j) + r_{k,j,n}^1, &j\in\mathcal N,
	\end{cases}\end{align}
	with the special case \(\bs e_1\tr{}=\bs\alpha\), by Corollary~\ref{cor: S cv evaluates g'},
	\begin{align}
		\bs e_n\tr{}\bs S\bs V_{k,j}f = \begin{cases}
		-\bs e_n\tr{} \bs s f(y_{k+1},j)+ f_x(y_k + t_n,j)+ r{}'{}^{1}_{k,j,n}, & j\in\mathcal P,
		\\-\bs e_n\tr{} \bs s f(y_k,j)- f_x(y_{k+1}-t_n,j)+ r{}'{}^{1}_{k,j,n}, & j\in\mathcal N,
	\end{cases}
	\end{align}
	and by Lemma~\ref{lem: D cv evaluates g},
	\begin{align}
		\bs e_n\tr{}\bs D \bs V_{k,j}f  = \begin{cases}
			f(y_{k+1}-t_n, j) + r^4_{k,j,n}, & j\in\mathcal P,
			\\ f(y_{k}+t_n, j) + r^4_{k,j,n}, & j\in\mathcal N.
		\end{cases}  \label{eqn: last2}
	\end{align}
	Substituting the expressions in \eqref{eqn: first2}-\eqref{eqn: last2} into \(\bs e_{k,i,n}\tr{}\bs B \bs V f\) in \eqref{eqn: first}-\eqref{eqn: last} effectively proves the result. We show the details for the proof of Cases 1-3 only as Cases 4-6 are analogous. 
	\begin{enumerate}[label=Case \arabic*:]
	\item for \(k=0,\, i\in\mathcal N,\, n=1,\)
	\begin{align*}
		\bs e_{0,i,1}\tr{}\bs B \bs V f &=
		\displaystyle\sum_{j\in\mathcal N}T_{ij}f(0,j) + \sum_{j\in\mathcal P}T_{ij}f(0, j) +  \sum_{j\in\mathcal P}T_{ij}r^4_{1,j,1}
		\\&= B f(0, j) + r^5_{0,i,1},
	\end{align*}
	where \(\displaystyle r^5_{0,i,1} = \sum_{j\in\mathcal P}T_{ij}r^4_{1,j,1}.\)
	\item for \(k=1,\,i\in\mathcal N, \, n=1,...,p,\)
	\begin{align*}
		\bs e_{1,i,n}\tr{}\bs B \bs V f &= \displaystyle |c_i|\bs e_n\tr{}\bs sf(0,i) + |c_i|(-\bs e_n\tr{} \bs s f(0,i)- f_x(\Delta-t_n,i)+ r{}'{}^{1}_{1,i,n}) 
		\\&\quad{} + \sum_{j\in\mathcal P}T_{ij}(f(\Delta-t_n, j) + r^4_{1,j,n}) + \sum_{j\in\mathcal N}T_{ij}(f(\Delta-t_n) + r_{1,j,n}^1), 
		\\&= \displaystyle -|c_i|f_x(\Delta-t_n,i)
		+ \sum_{j\in\mathcal S}T_{ij}f(\Delta-t_n, j) + \sum_{j\in\mathcal N}T_{ij}f(\Delta-t_n) + r^6_{1,i,n}
		\\&= Bf(\Delta-t_n,i) + r^6_{1,i,n}, 
	\end{align*}
	where \(\displaystyle r^6_{1,i,n}=r{}'{}^{1}_{1,i,n}+\sum_{j\in\mathcal S}T_{ij}r^4_{1,j,n}+\sum_{j\in\mathcal N}T_{ij}r_{1,j,n}^1\).
	\item for \(k=1,...,K-1,\, i\in\mathcal P, \, n=1,...,p,\)
	\begin{align*}
		\bs e_{k,i,n}\tr{}\bs B \bs V f &= \displaystyle |c_i|\bs e_n\tr{}\bs s(f(y_{k+1},i) + r_{k+1,i,1}^1) + |c_i|(-\bs e_n\tr{} \bs s f(y_{k+1},i)+ f_x(y_k + t_n,i)+ r{}'{}^{1}_{k,i,n}) 
		\\&\quad{} + \sum_{j\in\mathcal P}T_{ij}(f(y_k+t_n,j) + r_{k,j,n}^1) + \sum_{j\in\mathcal N}T_{ij}(f(y_{k}+t_n, j) + r^4_{k,j,n})
		\\ &= \displaystyle |c_i|f_x(y_k + t_n,i) + \sum_{j\in\mathcal P}T_{ij}f(y_k+t_n,j) + \sum_{j\in\mathcal N}T_{ij}f(y_{k}+t_n, j) + r^7_{k,i,n}
		\\ &= \displaystyle Bf(y_k+t_n,j) + r^7_{k,i,n},
	\end{align*}
	where \(\displaystyle r^7_{k,i,n} = |c_i|\bs e_n\tr{}\bs s r_{k+1,i,1}^1 + |c_i|r{}'{}^{1}_{k,i,n}
	+ \sum_{j\in\mathcal P}T_{ij}r_{k,j,n}^1 + \sum_{j\in\mathcal N}T_{ij}r^4_{k,j,n}.\)
	\end{enumerate}
	From Lemma~\ref{lem: cv evaluates g}, Corollary~\ref{cor: S cv evaluates g'} and Lemma~\ref{lem: D cv evaluates g}, the terms \(|r_{k,i,n}^1|,\, |r{}'{}^{1}_{k,i,n}|,\) and \(|r^4_{k,j,n}|\) are less than or equal to \[M'\left({\var(Z)^{1/3}}/\left(1-\var(Z)^{1/3}\right)\right)\] and \(|\bs e_n\tr{}\bs sr_{k,i,n}^1|\) is less than or equal to \[\bs e_n\tr{}\bs s M'\left({\var(Z)^{1/3}}/\left(1-\var(Z)^{1/3}\right)\right)\] for some \(M'<\infty\) independent of \(k,i,n\) and \(\bs e_n\tr\bs s\geq 0\). Hence, the error terms \(|r^5_{0,i,1}|,\, |r^6_{1,i,n}|,\, |r^7_{k,i,n}|\) are less than or equal to \(M(1+\bs e_n\tr{} \bs s)\left({\var(Z)^{1/3}}/\left(1-\var(Z)^{1/3}\right)\right)\) for some \(M<\infty\) independent of \(k,i,n\). This completes the proof.
\end{proof}

Define the notation
\[\bs B\bs V f(x,i) = \begin{cases}
	\bs e_{0,i,1}\tr{} \bs B \bs V f, & x=0, i\in\mathcal N,
	\\\bs e_{K+1,i,1}\tr{} \bs B \bs V f, & x=b, i\in\mathcal P,
	\\\bs e_{k,i,n}\tr{} \bs B \bs V f, & x\in[y_k+t_n, y_k+t_{n+1}),\, i\in\mathcal P,
	\\\bs e_{k,i,n}\tr{} \bs B \bs V f, & x\in(y_{k+1}-t_{n+1}, y_{k+1}-t_{n}],\, i\in\mathcal N,
\end{cases}\]
and let 
\[
	\bar x_{i}(x) = \begin{cases}
		0, & x=0,\\
		y_k+t_n, & x\in[y_k+t_n, y_k+t_{n+1}),\, i\in\mathcal P,\\
		y_{k+1}-t_n, & x\in(y_{k+1}-t_{n+1}, y_{k+1}-t_{n}],\, i\in\mathcal N,\\
		b, & x=b.
	\end{cases}
\]

\begin{theorem}[The generators are close to each other for functions in a core]\label{thm: generators converge}
	Let \(\mathcal C\) be the set of functions \(f:[0,b]\times \mathcal S\to\mathbb R\) with bounded and Lipschitz continuous derivative, \(|f_x(x,j)| \leq G'\) and \(|f_x(x,j)-f_x(y,j)|\leq L'|x-y|\) (\(G'\) and \(L'\) may depend on \(f\)). Then, for \(f\in\mathcal C\) and any \(\delta>0\), there exists a matrix exponential random variable \(Z\) with \(\var(Z)\) sufficiently small such that we can construct a QBD-RAP process from \(Z\) with generator \(\bs B\) and closing operator \(\bs V\) with
	\[||\bs B \bs V f(x,i) - Bf(x,i)||_\infty < \delta.\]
	Further, \(\mathcal C\) is a core for \(B\).
\end{theorem}
\begin{proof}
	Since \(f\) is differentiable with bounded derivative it is also bounded and Lipschitz continuous, \(|f(x,i)|\leq G\) and \(|f(x,i)-f(y,i)|\leq L|x-y|\) for some \(G,\,L\) (which may depend on \(f\)). 

	Consider 
	\begin{align*}
		||\bs B \bs V f(x,i) - Bf(x,i)||_\infty &= ||\bs B \bs V f(x,i) - Bf(\bar x_{i}(x),i) + Bf(\bar x_{i}(x),i)- Bf(x,i)||_\infty.
	\end{align*}
	By the triangle inequality
	\begin{align}
		||\bs B \bs V f(x,i) - Bf(x,i)||_\infty &\leq ||\bs B \bs V f(x,i) - Bf(\bar x_{i}(x),i)||_\infty + ||Bf(\bar x_{i}(x),i)- Bf(x,i)||_\infty.\label{eqn: bnd}
	\end{align}
	
	By the Lipschitz continuity of \(f\) and its derivative and the triangle inequality, then the second term is
	\begin{align*}
		||Bf(\bar x_{i}(x),i)- Bf(x,i)||_\infty &= ||\sum_{j\in\mathcal S}T_{ij}(f(\bar x_{i}(x),j)-f(x,j)) + c_i(f_x(\bar x_{i}(x),i)-f_x(x,i))||_\infty
		\\&\leq \sum_{j\in\mathcal S}|T_{ij}|\sup_{x}|f(\bar x_{i}(x),j)-f(x,j)| + |c_i|\sup_{x}|f_x(\bar x_{i}(x),i)-f_x(x,i)|
		\\&\leq \sum_{j\in\mathcal S}|T_{ij}|L\sup_{x}|\bar x_{i}(x)-x| + |c_i|L'\sup_{x}|\bar x_{i}(x)-x|
		\\&= \hat L\sup_{x}|\bar x_{i}(x)-x|,
	\end{align*}
	where \(\hat{L} = \sum_{j\in\mathcal S}|T_{ij}|L + |c_i|L'\). 
	
	Recall, from Lemma~\ref{lem: points}, for an ME with sufficiently small variance and sufficiently large order \(p\) of its minimal representation, we can choose \(t_1=0\) and \(2\var(Z)^{1/3} < t_2 < t_3 < ... < t_p < \Delta-\var(Z)^{1/3}<\Delta=t_{p+1}\), such that \(t_1,...,t_p\), are relatively prime to \(2\pi\sigma_i\), \(i=1,...,q\), where \(\sigma_i\) is the magnitude of the imaginary part of the complex conjugate pairs of eigenvalues of \(\bs S\), and
	\begin{align}
		\max\limits_{n=1,...,p}|t_n-t_{n+1}|<\frac{1}{2}\delta/\hat{L},\label{eqn: cond 2a}
	\end{align}
	and 
	\begin{align}
		\cfrac{\bs \alpha e^{\bs S t_n}\bs s}{\bs \alpha e^{\bs S t_n}\bs e} \leq \cfrac{\var(Z)^{1/3}}{{\frac{1}{4}\delta/\hat{L}}\,(1-\var(Z)^{1/3})}\label{eqn: cond 3b}
	\end{align}
	are satisfied. So assume such a representation for \(Z\), in which case \(\max_{n=1,...,p} |t_n-t_{n+1}|<\frac{1}{2}\delta/\hat{L}\) and therefore \(\sup_{x}|\bar x_{i}(x)-x|<\frac{1}{2}\delta/\hat{L}\) and hence the second term \(||Bf(\bar x_{i}(x),i)- Bf(x,i)||_\infty<\delta/2\). 
	
	By Lemma~\ref{lem: generators are close} the first term on the right-hand side of \eqref{eqn: bnd} is bounded above by \(M(1+\bs e_n\tr{} \bs s)\left({\var(Z)^{1/3}}/\left(1-\var(Z)^{1/3}\right)\right)\). By \eqref{eqn: cond 3b}, \(\bs e_n\tr{} \bs s=\cfrac{\bs \alpha e^{\bs S t_n}\bs s}{\bs \alpha e^{\bs S t_n}\bs e} \leq \cfrac{\var(Z)^{1/3}}{{\frac{1}{4}\delta/\hat{L}}\,(1-\var(Z)^{1/3})}\). Hence,
	\[||\bs B \bs V f(x,i) - Bf(\bar x_{i}(x),i)||_\infty \leq \left(M+ \cfrac{\var(Z)^{1/3}}{{\frac{1}{4}\delta}\,(1-\var(Z)^{1/3})/\hat{L}}\right)\cfrac{\var(Z)^{1/3}}{1-\var(Z)^{1/3}}.\]
	So, choose \(Z\) with sufficiently small variance such that \(\left(M+ \cfrac{\var(Z)^{1/3}}{{\frac{1}{4}\delta}\,(1-\var(Z)^{1/3})/\hat{L}}\right)\cfrac{\var(Z)^{1/3}}{1-\var(Z)^{1/3}}<\delta/2\). 
	
	Thus, we have shown that if \(\var(Z)\) is sufficiently small (and the order of the minimal representation of \(Z\) is sufficiently large) then we can find a representation of the distribution of \(Z\) such that the right-hand side of (\ref{eqn: bnd}) is less than \(\delta\).

	To complete the proof we show that \(\mathcal C\) is a core for \(B\). The space of polynomials is a subset of \(\mathcal C\). Let \(g\in \mathrm{Dom}(B)\), then \(g\) is continuously differentiable with respect to the spatial variable, \(x\). A basic result in analysis is that if a function \(g\) has a continuous derivative on \([0,b]\) then there exists a polynomial \(h(x)\) such that \(|g(x)-h(x)|<\varepsilon\) and \(|g_x(x)-h_x(x)|<\varepsilon\) for all \(x\in[0,b]\) (see, for example, \cite[Exercise 6.7.11]{abbott}). Hence, there exist polynomials \(h_j(x)\) such that \(|g(x,j)-h_j(x)|<\varepsilon\) and \(|g_x(x,j)-(h_j)_x(x)|<\varepsilon\). Moreover, let \(h(x,j)=h_j(x)\), then 
	\begin{align*}
		||Bg(x,i)-Bh(x,i)||_{\infty} &= ||\sum_{j\in\mathcal S} T_{ij}(g(x,j)-h(x,j)) + c_j(g_x(x,j)-h_x(x,j))||_{\infty}
		\\&\leq \sum_{j\in\mathcal S} |T_{ij}|\sup_{x}|g(x,j)-h(x,j)| + |c_j|\sup_{x}|g_x(x,j)-h_x(x,j)| 
		\\&\leq \varepsilon \left(\sum_{j\in\mathcal S} |T_{ij}| + |c_j|\right).
	\end{align*}
	Hence, \(h(x,j)\) can be found such that \((h,Bh)\) is arbitrarily close to \((g,Bg)\) and so the space of polynomials is a core for \(B\) and hence, so is \(\mathcal C\). 
\end{proof}

To prove convergence of the scheme we utilise Theorem~6.1,~Chapter~1 of \cite{ethierkurtz}, which we now recall the relevant parts of, for completeness. First, let us set up notation in the current context. For \(p=1,2,...\), let \(\mathcal L^{(p)} = \mathbb R^{(K+1)\times|S|\times p}\), and \(\mathcal L\), be Banach spaces (with the supremum norm denoted by \(||\cdot||_{\infty}\)) and let \(\bs V^{(p)}:\mathcal L\to \mathcal L^{(p)}\) be a sequence of bounded linear transformations.
\begin{theorem}[Theorem~6.1,~Chapter~1 of \cite{ethierkurtz}]\label{thm: ek}
	For \(p=1,2,...,\) let \(\{\bs T^{(p)}(t)\}\) and \(\{T(t)\}\) be strongly continuous contraction semigroups on \(\mathcal L^{(p)}\) and \(\mathcal L\) with generators \(\bs B^{(p)}\) and \(B\). Let \(\mathcal C\) be a core of \(B\). Then the following are equivalent:
	\begin{enumerate}
		\item[(a)] For each \(f\in \mathcal L\), \(\bs T^{(p)}(t)\bs V^{(p)} f \to T(t)f\) for all \(t\geq 0\), uniformly on bounded intervals.
		\item[(b)] Omitted.
		\item[(c)] For each \(f\in\mathcal C\), there exists \(f^{(p)}\in \mathrm{Dom}(\bs B^{(p)})\) for each \(p\geq 1\) such that \(f^{(p)}\to f\) and \(\bs B^{(p)}f^{(p)}\to Bf\).
	\end{enumerate}
\end{theorem}

\begin{theorem}[Convergence of the QBD-RAP to the fluid queue]\label{thm: main convergence}
	Let \(\{Z^{(p)}\}_{p}\) be a sequence of matrix exponential random variables with mean \(\Delta\), \(\var(Z^{(p)})\to 0\) as \(p\to\infty\), and increasing order. Further, let \(\{(L^{(p)}(t), \bs A^{(p)}(t), \phi(t))\},\,p=1,2,...,\) be the corresponding sequence of QBD-RAP approximations constructed as in Section~\ref{sec:QBD-RAP Approximation} with generators \(\bs B^{(p)}\) and closing operators \(\bs V^{(p)}\). Then the QBD-RAP scheme converges weakly to the distribution of the fluid queue as \(p\to\infty\).
\end{theorem}
\begin{proof}
	Consider a sequence of matrix exponential variables \(\{Z^{(p)}\}_{p}\) with mean \(\Delta\), decreasing variance (decreasing coefficient of variation) and increasing minimal order. Such a sequence exists; consider the sequence of Erlang distributions with mean \(\Delta\) and \(p\) phases. 
	
	We show weak convergence via Part~(c) of Theorem~\ref{thm: ek} and hence we must show two things: there exists \(f^{(p)}\in\mathcal L^{(p)}\) with \(f^{(p)}\to f\), and \(\bs B^{(p)}f^{(p)} \to Bf\). For the former, construct \(f^{(p)}\) as \(f_p\) from Corollary~\ref{eqn: fns converge}. For the latter, Theorem~\ref{thm: generators converge} shows that, for any \(\delta>0\), we can find \(p_0\) sufficiently large such that for every \(p>p_0\) we can find a representation of \(Z^{(p)}\) (we are free to choose the representation of \(Z^{(p)}\)) and an associated QBD-RAP approximation scheme such that \(||\bs B^{(p)}f^{(p)}-Bf||_{\infty}<\delta\), and hence \(||\bs B^{(p)}f^{(p)}-Bf||_{\infty}\to 0\) as \(p\to \infty\).
	
	Now, apply Theorem~\ref{thm: ek} which states that Part~(a) also holds, so \(\bs T^{(p)}(t)f^{(p)}\to T(t)f\), which is equivalent to \(\mathbb E\left[\displaystyle\sum_{\ell=1}^{K+1}\sum_{i\in\mathcal S}\bs A^{(p)}(t)1(L^{(p)}(t)=\ell, \phi(t)=i)\bs V^{(p)}_{\ell,i}f\right]\to \mathbb E[f(X(t),\varphi(t))]\), and this completes the proof. 
\end{proof}

\section{Conclusion}
\label{sec:Conclusion}

In this paper we constructed a QBD-RAP approximation to a fluid queue, described its dynamics, and proved its convergence. Because the QBD-RAP is itself a stochastic process, every approximation to a probability that it produces is guaranteed to have the properties of a true probability: non-negative, bounded above by 1, and integrating to 1.

The construction discretises the state space of the fluid queue into intervals and, within each interval, approximates deterministic events using (concentrated) matrix exponential distributions. The resulting phase process has the same distribution as the phase process of the fluid queue, and the level process approximates the distribution of which interval contains the fluid level. To recover an approximation to the distribution of the fluid queue \emph{within} each interval, we introduced the closing operator, which uses the orbit process of the QBD-RAP together with bases of conditional residual time distributions.

Our main distributional result establishes convergence of the generator of the QBD-RAP to the generator of the fluid queue, with respect to any sequence of matrix exponential distributions whose coefficient of variation tends to zero at fixed mean. No particular sequence is required, only that the coefficient of variation vanishes and the minimal order grows without bound, and the proof method itself is novel, analysing the QBD-RAP via bases of conditional residual time distributions rather than the orbit process used in prior analyses of RAP-modulated processes. Together with the fact that the QBD-RAP retains a genuine probabilistic interpretation, this convergence result gives the first generator-theoretic guarantee that an approximation of this kind converges to the fluid queue while preserving the properties of true probabilities throughout. Numerical results demonstrating the accuracy of the approximation, and comparing it against existing methods, will be presented in subsequent work.

\section*{Acknowledgements}
The authors thank Peter Taylor at The University of Melbourne for his contributions to the initial development of this work.

\section*{Funding}
This work was supported by the Australian Government Research Training Program (RTP) Scholarship and by the ARC Centre of Excellence in Mathematical and Statistical Frontiers (ACEMS, CE140100049).

\section*{Declaration of generative AI use}
The original version of this article was drafted by the authors. Claude (Sonnet 5) was used for copyediting and language refinement, to make the article more concise and readable. The corresponding author confirms, to the best of their knowledge, that the content in this article reflects the authors' own work and is accurate, and the corresponding author takes full responsibility for the integrity of the whole content.

\bibliographystyle{abbrv}
\bibliography{bibdata}

\appendix
\section{Bounds for closing operators.}
For \(j\in\mathcal S,\,\ell\in\{1,...,K\}\), define \(g_{\ell,j}(x)=f(y_\ell+x, j)1(j\in\mathcal P) + f(y_{\ell+1}-x, j)1(j\in\mathcal N)\) and suppose \(g_{\ell,j}:[0,\Delta] \to \mathbb R\) is bounded and Lipschitz continuous, \(|g_{\ell,j}(x)|\leq G_{\ell,j}\leq G\),~\(|g_{\ell,j}(x)-g_{\ell,j}(y)|\leq L_{\ell,j}|x-y|\leq L|x-y|,\) \(\ell=1,...,K,\,j\in\mathcal S\). Consider
\begin{align*}
	&\left|\bs e_n\tr{}\bs V_{\ell,j}f - g_{\ell,j}(t_n)\right|
	\\&= \left|\bs e_n\tr{}\int_{[0,\infty)} e^{ \bs{S}x} \bs s \left(g_{\ell,j}(|x\,\mathrm{mod }\, 2\Delta - \Delta|) - g_{\ell,j}(t_n)\right)\wrt x\right|
	\\&= \left|\int_{[0,\infty)} \cfrac{\mathbb P(Z-t_n\in [x, x+\wrt x])}{\mathbb P(Z\geq t_n)} \left(g_{\ell,j}(|x\,\mathrm{mod }\, 2\Delta - \Delta|) - g_{\ell,j}(t_n)\right)\right|
	\\&\leq \int_{[0,\infty)} \cfrac{\mathbb P(Z-t_n\in [x, x+\wrt x])}{\mathbb P(Z\geq t_n)} \left|\left(g_{\ell,j}(|x\,\mathrm{mod }\, 2\Delta - \Delta|) - g_{\ell,j}(t_n)\right)\right|
	\\&= \int_{[0,\infty)\setminus (\Delta-t_n-\varepsilon,\Delta-t_n+\varepsilon)} \cfrac{\mathbb P(Z-t_n\in [x, x+\wrt x])}{\mathbb P(Z\geq t_n)} \left|\left(g_{\ell,j}(|x\,\mathrm{mod }\, 2\Delta - \Delta|) - g_{\ell,j}(t_n)\right)\right| 
	\\&\quad{} + \int_{[\Delta-t_n-\varepsilon,\Delta-t_n+\varepsilon]} \cfrac{\mathbb P(Z-t_n\in [x, x+\wrt x])}{\mathbb P(Z\geq t_n)} \left|\left(g_{\ell,j}(|x - \Delta|) - g_{\ell,j}(t_n)\right)\right|.
\end{align*}
Since we chose \(t_n<\Delta-\varepsilon\), then \(\mathbb P(Z\geq t_n)\geq 1-\cfrac{\var(Z)}{\varepsilon^2}\), by Chebyshev's inequality. Hence, the second term in the last line above is less than or equal to 
\begin{align*}
	&\int_{[\Delta-t_n-\varepsilon,\Delta-t_n+\varepsilon]} \cfrac{\mathbb P(Z-t_n\in [x, x+\wrt x])}{1-{\var(Z)}/{\varepsilon^2}} \left|\left(g_{\ell,j}(|x - \Delta|) - g_{\ell,j}(t_n)\right)\right|
	\\&\leq \int_{[\Delta-t_n-\varepsilon,\Delta-t_n+\varepsilon]} \cfrac{\mathbb P(Z-t_n\in [x, x+\wrt x])}{1-{\var(Z)}/{\varepsilon^2}} L\varepsilon 
	\\&\leq \cfrac{L\varepsilon}{1-{\var(Z)}/{\varepsilon^2}}
\end{align*}
where the first inequality is due to the Lipschitz continuity of \(g_{\ell,j}\). The first term is less than or equal to 
\begin{align*}
	&\int_{[0,\infty)\setminus (\Delta-t_n-\varepsilon,\Delta-t_n+\varepsilon)} \cfrac{\mathbb P(Z-t_n\in [x, x+\wrt x])}{1-{\var(Z)}/{\varepsilon^2}} \left|\left(g_{\ell,j}(|x\,\mathrm{mod }\, 2\Delta - \Delta|) - g_{\ell,j}(t_n)\right)\right| 
	\\&\leq \int_{[0,\infty)\setminus (\Delta-t_n-\varepsilon,\Delta-t_n+\varepsilon)} \mathbb P(Z-t_n\in [x, x+\wrt x])\cfrac{2G}{1-{\var(Z)}/{\varepsilon^2}} 
	\\&=\mathbb P(Z\notin (\Delta-\varepsilon,\Delta+\varepsilon))\cfrac{2G}{1-{\var(Z)}/{\varepsilon^2}}
	\\&\leq \cfrac{\var(Z)}{\varepsilon^2}\cfrac{2G}{1-{\var(Z)}/{\varepsilon^2}},
\end{align*}
where the first inequality is from the boundedness of \(g_{\ell,j}\) and the last inequality is from Chebyshev's inequality. Hence, 
\[
	\left|\bs e_n\tr{}\bs V_{\ell,j}g - g_{\ell,j}(t_n)\right| \leq \cfrac{\var(Z)}{\varepsilon^2}\cfrac{2G}{1-{\var(Z)}/{\varepsilon^2}} + \cfrac{L\varepsilon}{1-{\var(Z)}/{\varepsilon^2}}
\]
Setting \(\varepsilon=\var(Z)^{1/3}\) (as in \cite{hhat2020}), then when \(\var(Z)\) is small, 
\[
	\cfrac{\var(Z)}{\varepsilon^2}\cfrac{2G}{1-{\var(Z)}/{\varepsilon^2}}+\cfrac{L\varepsilon}{1-{\var(Z)}/{\varepsilon^2}}=\cfrac{\var(Z)^{1/3}(2G+L)}{1-\var(Z)^{1/3}}
\]
is also small. Hence, we have shown that \(\bs e_n\tr{}\bs V_{\ell,j}f,\) approximates the value \(f\) at \((y_\ell + t_n,j),\, j\in\mathcal P\) or \((y_{\ell+1} - t_n,j),\, j\in\mathcal N\), as per the following lemma.
\begin{lemma}\label{lem: cv evaluates g}
	For \(j\in\mathcal S,\,\ell\in\{1,...,K\},\) let \(g_{\ell,j}(x)=f(y_\ell+x, j)1(j\in\mathcal P) + f(y_{\ell+1}-x, j)1(j\in\mathcal N)\), \(x\in [0,\Delta]\), and suppose \(g_{\ell,j}:[0,\Delta] \to \mathbb R\) is bounded and Lipschitz continuous, \(|g_{\ell,j}(x)|\leq G\),~\(|g_{\ell,j}(x)-g_{\ell,j}(y)|\leq L|x-y|\). Then 
	\[
		\bs e_n\tr{}\bs V_{\ell,j}f = g_{\ell,j}(t_n)+r^1_{\ell,j,n} = \begin{cases}
			f(y_\ell+t_n, j) + r^1_{\ell,j,n}, & j\in\mathcal P,
			\\ f(y_{\ell+1}-t_n, j) + r^1_{\ell,j,n}, & j\in\mathcal N,
		\end{cases}
	\]
	for \(n=1,...,p\), where \(|r^1_{\ell,j,n}|\leq \cfrac{\var(Z)^{1/3}(2G+L)}{1-\var(Z)^{1/3}}\).
\end{lemma}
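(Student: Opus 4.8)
The plan is to convert $\bs e_n\tr{}\bs V_{\ell,j}f$ into a conditional expectation involving the matrix exponential variable $Z$, and then bound its distance from $g_{\ell,j}(t_n)$ by splitting on whether $Z$ is close to its mean $\Delta$. First I would derive a closed form for $\bs e_n\tr{}\bs V_{\ell,j}f$. Starting from $\bs V_{\ell,j}f=\int_{\calD_{\ell,j}}\bs v_{\ell,j}(x)f(x,j)\wrt x$, I would expand $[\bs I-e^{\bs S2\Delta}]^{-1}$ as the Neumann series $\sum_{k\ge0}e^{\bs S2\Delta k}$ (legitimate because $dev(\bs S)<0$ forces every eigenvalue of $e^{\bs S2\Delta}$ to have modulus below $1$), interchange sum and integral, and carry out the changes of variable that fold together the two summands defining $\bs v_{\ell,j}$. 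For $j\in\mathcal P$ this reassembles the doubly-indexed family of integrals over $[2k\Delta,(2k+1)\Delta]$ and $[(2k+1)\Delta,(2k+2)\Delta]$ into the single integral $\int_0^\infty e^{\bs Sx}\bs s\,f\big(y_\ell+|(x\,\mathrm{mod}\,2\Delta)-\Delta|,j\big)\wrt x$, with the mirror-image identity for $j\in\mathcal N$; these are exactly the manipulations already displayed above. Premultiplying by $\bs e_n\tr{}$, recalling that $\bs e_n\tr{}e^{\bs Sx}\bs s$ is the density of the conditioned residual $Z-t_n\mid Z\ge t_n$, and using the definition of $g_{\ell,j}$ to merge the two cases, yields the key identity $\bs e_n\tr{}\bs V_{\ell,j}f=\mathbb E\big[g_{\ell,j}\big(|(Z-t_n\,\mathrm{mod}\,2\Delta)-\Delta|\big)\mid Z\ge t_n\big]$.

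Next I would estimate the error. I would write $\bs e_n\tr{}\bs V_{\ell,j}f-g_{\ell,j}(t_n)$ as $\mathbb E\big[g_{\ell,j}(|(Z-t_n\,\mathrm{mod}\,2\Delta)-\Delta|)-g_{\ell,j}(t_n)\mid Z\ge t_n\big]$ and split the event $\{Z\ge t_n\}$ according to whether $Z\in(\Delta-\varepsilon,\Delta+\varepsilon)$. On that event, the ordering $0=t_1<2\varepsilon<t_2<\cdots<t_p<\Delta-\varepsilon$ guarantees $Z-t_n\in(0,2\Delta)$, so reduction modulo $2\Delta$ is inert and $\big||(Z-t_n)-\Delta|-t_n\big|<\varepsilon$; Lipschitz continuity bounds the integrand by $L\varepsilon$, and since $\mathbb P(Z\ge t_n)\ge1-\var(Z)/\varepsilon^2$ by Chebyshev's inequality this piece contributes at most $L\varepsilon/(1-\var(Z)/\varepsilon^2)$. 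On the complementary event I would use $|g_{\ell,j}(\cdot)-g_{\ell,j}(t_n)|\le2G$ together with $\mathbb P\big(Z\notin(\Delta-\varepsilon,\Delta+\varepsilon)\big)\le\var(Z)/\varepsilon^2$ (Chebyshev again) to bound the contribution by $\dfrac{2G\var(Z)/\varepsilon^2}{1-\var(Z)/\varepsilon^2}$. Adding the two bounds and taking $\varepsilon=\var(Z)^{1/3}$ (as in \cite{hhat2020}) collapses the total to $\dfrac{\var(Z)^{1/3}(2G+L)}{1-\var(Z)^{1/3}}$, which is the asserted bound on $r^1_{\ell,j,n}$.

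The main obstacle is the bookkeeping in the first step: tracking the changes of variable so that the countable family of shifted integrals reassembles cleanly into a single integral against the residual-life density, and checking that the reflection $x\mapsto|(x\,\mathrm{mod}\,2\Delta)-\Delta|$ is precisely the map that transports the density $\bs\alpha e^{\bs Sx}\bs s$ onto $[0,\Delta)$. Once that identity is in hand, the error estimate is a routine two-region split using nothing beyond Chebyshev's inequality and the boundedness and Lipschitz hypotheses on $g_{\ell,j}$.
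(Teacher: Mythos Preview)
Your proposal is correct and follows essentially the same approach as the paper: you derive the identity $\bs e_n\tr{}\bs V_{\ell,j}f=\mathbb E\big[g_{\ell,j}(|(Z-t_n)\,\mathrm{mod}\,2\Delta-\Delta|)\mid Z\ge t_n\big]$ via the same Neumann-series expansion and changes of variable, and then bound the error by the same two-region split (near/far from $\Delta$) using Chebyshev's inequality, Lipschitz continuity on the near set, and the uniform bound $2G$ on the far set, before specialising $\varepsilon=\var(Z)^{1/3}$. The only cosmetic difference is that you phrase the split in terms of the event $\{Z\in(\Delta-\varepsilon,\Delta+\varepsilon)\}$ whereas the paper splits the integration variable $x=Z-t_n$ over $(\Delta-t_n-\varepsilon,\Delta-t_n+\varepsilon)$; these are equivalent.
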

A direct corollary is a bound on the accuracy of the approximation of \(f\) by the simple function \(f_p\).
\begin{corollary}\label{cor: Vf approximates f}
	Let \(f:[0,b]\times \mathcal S \to \mathbb R\) be bounded and Lipschitz continuous, \(|f(x,j)|\leq G\) and \(|f(x,j)-f(y,j)|\leq L|x-y|\), and define \(f_p(x,j)=\bs Vf(x,j)\), then 
	\begin{align*}
		f_p(x, j) &
		= \begin{cases}
			f(0,j), & j\in\mathcal N, x=0,
			\\f(y_{\ell}+t_n,j) + r_{\ell,j,n}^1, & j\in\mathcal P,\,x\in \left[y_{\ell}+t_n,y_{\ell}+t_{n+1}\right),\, n=1,...,p,
			\\ f(y_{\ell+1}-t_n,j)+ r_{\ell,j,n}^1,& j\in\mathcal N,\,x\in \left(y_{\ell+1}-t_{n+1},y_{\ell+1}-t_{n}\right],\, n=1,...,p,
			\\f(b,j), & j\in\mathcal P, x=b,
		\end{cases}
	\end{align*}
	where \(|r^1_{\ell,j,n}|\leq\cfrac{\var(Z)^{1/3}(2G+L)}{1-\var(Z)^{1/3}}\).
\end{corollary}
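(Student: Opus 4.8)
The plan is to reduce the corollary directly to Lemma~\ref{lem: cv evaluates g} by unpacking the definition of the simple function $f_p=\bs Vf$ one piece at a time. First I would dispose of the two boundary cases. For $j\in\mathcal N$ and $x=0$ the relevant component of $\bs Vf$ is $\bs V_{0,j}f=f(0,j)$ by definition of the closing operator, and for $j\in\mathcal P$ and $x=b$ it is $\bs V_{K+1,j}f=f(b,j)$; in both cases the claimed identity holds exactly, with no remainder term, so nothing further is required there.

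For the interior, fix $\ell\in\{1,\dots,K\}$, $j\in\mathcal S$, and $n\in\{1,\dots,p\}$, and recall that on the sub-interval $x\in[y_\ell+t_n,y_\ell+t_{n+1})$ (for $j\in\mathcal P$) or $x\in(y_{\ell+1}-t_{n+1},y_{\ell+1}-t_n]$ (for $j\in\mathcal N$), with the convention $t_{p+1}=\Delta$, the value $f_p(x,j)$ is by definition the scalar $\bs e_n\tr{}\bs V_{\ell,j}f$, which is constant across that sub-interval. Define $g_{\ell,j}(x)=f(y_\ell+x,j)1(j\in\mathcal P)+f(y_{\ell+1}-x,j)1(j\in\mathcal N)$ on $[0,\Delta]$ exactly as in Lemma~\ref{lem: cv evaluates g}. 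I would then check that the global hypotheses on $f$ pass to $g_{\ell,j}$ uniformly in $(\ell,j)$: since $x\mapsto y_\ell+x$ and $x\mapsto y_{\ell+1}-x$ are isometries of $[0,\Delta]$ into $[0,b]$, the function $g_{\ell,j}$ is bounded by the same constant $G$ and Lipschitz with the same constant $L$. Applying Lemma~\ref{lem: cv evaluates g} gives $\bs e_n\tr{}\bs V_{\ell,j}f=g_{\ell,j}(t_n)+r^1_{\ell,j,n}$ with $|r^1_{\ell,j,n}|\leq \var(Z)^{1/3}(2G+L)/(1-\var(Z)^{1/3})$, and substituting the definition of $g_{\ell,j}$ yields $f(y_\ell+t_n,j)+r^1_{\ell,j,n}$ when $j\in\mathcal P$ and $f(y_{\ell+1}-t_n,j)+r^1_{\ell,j,n}$ when $j\in\mathcal N$, which is precisely the stated formula.

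Since there is essentially no computation left once the lemma is invoked, the closest thing to an obstacle is bookkeeping: confirming that the partition of each $\mathcal D_{\ell,j}$ into the sub-intervals indexed by $n=1,\dots,p$ (with $t_{p+1}=\Delta$) is exhaustive, so that every interior $x$ lies in exactly one of them and $f_p$ is well defined as a simple function, and noting that the remainder bound inherited from the lemma is uniform in $\ell,j,n$ because $G$ and $L$ are chosen uniformly. With that observation in place, the corollary follows immediately.
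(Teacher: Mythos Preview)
Your proposal is correct and mirrors the paper's approach exactly: the paper presents this result as a direct corollary of Lemma~\ref{lem: cv evaluates g} with no separate proof, and your argument---handling the boundary cases from the definition of $\bs V$ and then invoking the lemma on each interior sub-interval via the auxiliary function $g_{\ell,j}$---is precisely the intended reduction.
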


In addition to the above, the proof that the generator converges requires the following results. Suppose further that \(g_{\ell,j}:[0,\Delta] \to \mathbb R\) is differentiable on \((0,\Delta)\). Consider
\begin{align*}
	\bs e_n\tr{}\bs S\bs V_{\ell,j}f &= \bs e_n\tr{}\bs S\int_{[0,\infty)} e^{ \bs{S}x} \bs s g_{\ell,j}(|x\,\mathrm{mod }\, 2\Delta - \Delta|)\wrt x
	\\&= \sum_{k=0}^\infty \bs e_n\tr{}\bs S\int_{[0,\Delta)} \left( e^{ \bs{S}(k2\Delta+x)} \bs s + e^{ \bs{S}((k+1)2\Delta-x)} \bs s \right)g_{\ell,j}(\Delta-x)\wrt x.
\end{align*}
Integrating by parts, then 
\begin{align*}
	\bs e_n\tr{}\bs S\bs V_{\ell,j}f &= \sum_{k=0}^\infty \bs e_n\tr{} \left.\left( e^{ \bs{S}(k2\Delta+x)} \bs s - e^{ \bs{S}((k+1)2\Delta-x)} \bs s \right)g_{\ell,j}(\Delta-x)\right|_{0}^{\Delta}
	\\&\quad{} + \sum_{k=0}^\infty \bs e_n\tr{}\int_{[0,\Delta)} \left( e^{ \bs{S}(k2\Delta+x)} \bs s - e^{ \bs{S}((k+1)2\Delta-x)} \bs s \right)(g_{\ell,j})_x(\Delta-x)\wrt x.
\end{align*}
The first series above is a telescoping sum and equals 
\[
	-\bs e_n\tr{} \bs s g_{\ell,j}(\Delta)=\begin{cases}
		-\bs e_n\tr{} \bs s f(y_{\ell+1},j), & j\in\mathcal P,
		\\-\bs e_n\tr{} \bs s f(y_\ell,j), & j\in\mathcal N,
	\end{cases}
\]
the second series is equal to 
\[
	\bs e_n\tr{}\bs V_{\ell,j}(g_{\ell,j})_x = \begin{cases}
		\bs e_n\tr{}\bs V_{\ell,j}f_x, & j\in\mathcal P,
		\\-\bs e_n\tr{}\bs V_{\ell,j}f_x, & j\in\mathcal N,
	\end{cases}
\]
to which we can apply Lemma~\ref{lem: cv evaluates g}. Hence, we have the following corollary.
\begin{corollary}\label{cor: S cv evaluates g'}
	For \(j\in\mathcal S\), \(\ell\in\{1,...,K\}\), let \(g_{\ell,j}(x)=f(y_\ell+x, j)1(j\in\mathcal P) + f(y_{\ell+1}-x, j)1(j\in\mathcal N)\) and suppose \(g_{\ell,j}:[0,\Delta] \to \mathbb R\) is differentiable on \((0,\Delta)\) and the derivative is bounded and Lipschitz continuous, \(|(g_{\ell,j})_x(x)|\leq G'\),~\(|(g_{\ell,j})_x(x)-g_{\ell,j}'(y)|\leq L'|x-y|\). Then 
	\[
		\bs e_n\tr{}\bs S\bs V_{\ell,j}f = \begin{cases}
			-\bs e_n\tr{} \bs s f(y_{\ell+1},j)+ f_x(y_\ell + t_n,j)+ r{}'{}^{1}_{\ell,j,n}, & j\in\mathcal P,
			\\-\bs e_n\tr{} \bs s f(y_\ell,j)- f_x(y_{\ell+1}-t_n,j)+ r{}'{}^{1}_{\ell,j,n}, & j\in\mathcal N,
		\end{cases}
	\]
	for \(n=1,...,p\), where \(|r{}'{}^{1}_{\ell,j,n}|\leq\cfrac{\var(Z)^{1/3}(2G'+L')}{1-\var(Z)^{1/3}}\).
\end{corollary}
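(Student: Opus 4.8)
The plan is to reduce $\bs e_n\tr\bs S\bs V_{\ell,j}f$ to a form governed by Lemma~\ref{lem: cv evaluates g}, by the integration-by-parts computation displayed just above the statement, made rigorous. First I would start from the closed form derived earlier, $\bs e_n\tr\bs V_{\ell,j}f=\int_{[0,\infty)}\bs e_n\tr e^{\bs Sx}\bs s\,g_{\ell,j}(|x\,\mathrm{mod}\,2\Delta-\Delta|)\wrt x$, premultiply by $\bs S$, split the integral into the periods $[k2\Delta,(k+1)2\Delta)$, and on each period use that $\bs S e^{\bs Sx}\bs s$ is the $x$-derivative of $e^{\bs Sx}\bs s$, together with the orientation reversal of the ``$\mathrm{mod}\,2\Delta$'' fold on the second half-period. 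Integrating by parts period-by-period then produces a series of boundary terms plus a series of integrals against $g_{\ell,j}'$. The interchange of the infinite sum with the integration by parts deserves a remark but is routine, since $\|e^{\bs Sx}\bs s\|$ decays geometrically along the lattice $2\Delta\mathbb Z$ (because $dev(\bs S)<0$), so every series in sight converges absolutely.

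Next I would evaluate the two series. The boundary-term series telescopes: the interior endpoint contributions cancel in consecutive pairs and, using $e^{\bs Sx}\bs s\to\bs 0$ as $x\to\infty$, the sum collapses to $-\bs e_n\tr\bs s\,g_{\ell,j}(\Delta)$, which by the definition of $g_{\ell,j}$ equals $-\bs e_n\tr\bs s\,f(y_{\ell+1},j)$ for $j\in\mathcal P$ and $-\bs e_n\tr\bs s\,f(y_\ell,j)$ for $j\in\mathcal N$. The remaining series of $g_{\ell,j}'$-integrals I would re-fold back into the ``$\mathrm{mod}\,2\Delta$'' form; pushing the reflection $w=\Delta-|x\,\mathrm{mod}\,2\Delta-\Delta|$ through the calculation and matching the alternating signs on the half-periods against the two exponential terms in the definition of $\bs v_{\ell,j}$ identifies this series with $\bs e_n\tr$ times the closing operator evaluated at the spatial derivative $f_x$ --- namely $\bs e_n\tr\bs V_{\ell,j}f_x$ when $j\in\mathcal P$, and $-\bs e_n\tr\bs V_{\ell,j}f_x$ when $j\in\mathcal N$ (the extra sign coming from $g_{\ell,j}'(x)=-f_x(y_{\ell+1}-x,j)$ in the negative-rate case), up to terms supported where the density $\bs e_n\tr e^{\bs Sx}\bs s$ puts negligible mass.

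Finally I would invoke Lemma~\ref{lem: cv evaluates g} with $f_x$ in place of $f$. Its hypotheses are met because the ``$g$''-function of $f_x$ is $\pm g_{\ell,j}'$, which is bounded by $G'$ and Lipschitz with constant $L'$, so the lemma gives $\bs e_n\tr\bs V_{\ell,j}f_x=g_{\ell,j}'(t_n)+r$ when $j\in\mathcal P$ and $\bs e_n\tr\bs V_{\ell,j}f_x=-g_{\ell,j}'(t_n)+r$ when $j\in\mathcal N$, with $|r|\le\var(Z)^{1/3}(2G'+L')/(1-\var(Z)^{1/3})$; here $g_{\ell,j}'(t_n)=f'(y_\ell+t_n,j)$ for $j\in\mathcal P$ and $g_{\ell,j}'(t_n)=-f'(y_{\ell+1}-t_n,j)$ for $j\in\mathcal N$. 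Combining this with the telescoping term and setting the remainder $r{}'{}^{1}_{\ell,j,n}$ in the statement equal to $\pm r$ gives precisely the claimed identity and bound.

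I expect the second step to be the main obstacle: cleanly showing that the leftover $g_{\ell,j}'$-integrals reassemble into $\pm\bs e_n\tr\bs V_{\ell,j}f_x$. One must carry the fold $w=\Delta-|x\,\mathrm{mod}\,2\Delta-\Delta|$ correctly through the integration by parts, keep the half-period sign flips aligned with the two terms of $\bs v_{\ell,j}$, and check that the residual contributions --- which live on a set carrying vanishingly small mass under $\bs e_n\tr e^{\bs Sx}\bs s$ and so are absorbed into the $\mathcal O(\var(Z)^{1/3})$ error --- genuinely do not spoil the bound. Once that identification is in hand, the rest is bookkeeping together with one application of Lemma~\ref{lem: cv evaluates g}.
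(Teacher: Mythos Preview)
Your plan is correct and follows exactly the route the paper takes: split the integral into periods of length $2\Delta$, integrate by parts using $\bs S e^{\bs Sx}\bs s=\frac{\wrt}{\wrt x}e^{\bs Sx}\bs s$, telescope the boundary terms to $-\bs e_n\tr\bs s\,g_{\ell,j}(\Delta)$, identify the remaining series with $\pm\bs e_n\tr\bs V_{\ell,j}f_x$, and then invoke Lemma~\ref{lem: cv evaluates g} applied to $f_x$.

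One small correction to your expectations: the reassembly step you flag as the main obstacle is in fact an exact identity, not an approximation. After integration by parts, the $g_{\ell,j}'$-integrals refold precisely into $\int_{[0,\infty)}e^{\bs Sx}\bs s\,g_{\ell,j}'(|x\,\mathrm{mod}\,2\Delta-\Delta|)\wrt x$ with no residual mass to control; there are no leftover terms ``living on a set of small mass'' to absorb into the error. Consequently the entire remainder $r{}'{}^{1}_{\ell,j,n}$ comes from the single application of Lemma~\ref{lem: cv evaluates g} to $f_x$, which is why the bound is exactly $\var(Z)^{1/3}(2G'+L')/(1-\var(Z)^{1/3})$ with no additional constant. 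Your anticipated difficulty will dissolve once you carry out the change of variables carefully.
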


The following result is also used to prove convergence.
\begin{lemma}\label{lem: D cv evaluates g}
	For \(j\in\mathcal S,\,\ell\in\{1,...,K\},\) let \(g_{\ell,j}(x)=f(y_\ell+x, j)1(j\in\mathcal P) + f(y_{\ell+1}-x, j)1(j\in\mathcal N)\), \(x\in [0,\Delta]\), and suppose \(g_{\ell,j}:[0,\Delta] \to \mathbb R\) is bounded and Lipschitz continuous, \(|g_{\ell,j}(x)|\leq G\),~\(|g_{\ell,j}(x)-g_{\ell,j}(y)|\leq L|x-y|\). Then 
	\begin{align}
		\bs e_n\tr{}\bs D \bs V_{\ell,j}f = g_{\ell,j}(\Delta-t_n)+r^4_{\ell,j,n} 
		= \begin{cases}
			f(y_{\ell+1}-t_n, j) + r^4_{\ell,j,n}, & j\in\mathcal P,
			\\ f(y_{\ell}+t_n, j) + r^4_{\ell,j,n}, & j\in\mathcal N,
		\end{cases}
	\end{align}
	for \(n=1,...,p\), where \(|r^4_{\ell,j,n}|\leq M \cfrac{\var(Z)^{1/3}}{1-\var(Z)^{1/3}}\), for some \(M<\infty\) independent of \(\var(Z),\, \ell,\, j,\, n,\) (but dependent on \(G\) and \(L\)). 
\end{lemma}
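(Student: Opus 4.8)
The plan is to recast $\bs e_n\tr{}\bs D\bs V_{\ell,j}f$ as a conditional expectation $\mathbb E[h(W_n)\mid Z>t_n]$ of a bounded function $h$ evaluated at a random argument $W_n$ that concentrates near $\Delta-t_n$, and then to control the error with the same Chebyshev-type estimates used in the proof of Lemma~\ref{lem: cv evaluates g}. Throughout I would set $\varepsilon=\var(Z)^{1/3}$ and treat $j\in\mathcal P$ and $j\in\mathcal N$ simultaneously: the identities obtained just before Lemma~\ref{lem: cv evaluates g} give, in both cases, $\bs V_{\ell,j}f=\int_{[0,\infty)}e^{\bs Sx}\bs s\,g_{\ell,j}\bigl(\bigl|(x\ \mathrm{mod}\ 2\Delta)-\Delta\bigr|\bigr)\wrt x$, and $g_{\ell,j}(\Delta-t_n)$ equals $f(y_{\ell+1}-t_n,j)$ when $j\in\mathcal P$ and $f(y_\ell+t_n,j)$ when $j\in\mathcal N$, so the translation back to the claimed statement is immediate.

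First I would record two structural identities. From the displayed form of $\bs V_{\ell,j}f$, for any $1\times p$ row vector $\bs a$ we have $\bs a\bs V_{\ell,j}f=\int_{[0,\infty)}\bs a e^{\bs Sx}\bs s\,g_{\ell,j}\bigl(\bigl|(x\ \mathrm{mod}\ 2\Delta)-\Delta\bigr|\bigr)\wrt x$; taking $\bs a=\bs\alpha e^{\bs Sy}/(\bs\alpha e^{\bs Sy}\bs e)$ and substituting $w=y+x$ gives, for an independent copy $Z'$ of $Z$,
\[h(y):=\frac{\bs\alpha e^{\bs Sy}}{\bs\alpha e^{\bs Sy}\bs e}\bs V_{\ell,j}f=\mathbb E\!\left[g_{\ell,j}\bigl(\bigl|((Z'-y)\ \mathrm{mod}\ 2\Delta)-\Delta\bigr|\bigr)\,\middle|\,Z'\geq y\right].\]
Separately, since $\bs s=-\bs S\bs e$ gives $\int_{\Delta-\varepsilon}^{\infty}e^{\bs Sy}\bs s\wrt y=e^{\bs S(\Delta-\varepsilon)}\bs e$, the two pieces of (\ref{eqn: D instead}) combine to
\[\bs e_n\tr{}\bs D=\mathbb E\!\left[\frac{\bs\alpha e^{\bs SW_n}}{\bs\alpha e^{\bs SW_n}\bs e}\,\middle|\,Z>t_n\right],\qquad W_n:=\min\{Z-t_n,\ \Delta-\varepsilon\},\]
because $\bs e_n\tr{}e^{\bs Sy}\bs s$ is the density of $Z-t_n$ given $Z>t_n$ and $\bs e_n\tr{}e^{\bs S(\Delta-\varepsilon)}\bs e=\mathbb P(Z-t_n\geq\Delta-\varepsilon\mid Z>t_n)$ is the mass of the atom of $W_n$ at $\Delta-\varepsilon$. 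Combining the two identities, $\bs e_n\tr{}\bs D\bs V_{\ell,j}f=\mathbb E[h(W_n)\mid Z>t_n]$.

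Next I would prove two uniform estimates. For the first, $\sup_{y\in[0,\Delta-\varepsilon]}|h(y)-g_{\ell,j}(y)|\leq L\varepsilon+2G\cdot\tfrac{\var(Z)/\varepsilon^2}{1-\var(Z)/\varepsilon^2}$: on the event $\{|Z'-\Delta|<\varepsilon\}$ one has $Z'-y\in[0,2\Delta)$, so $(Z'-y)\ \mathrm{mod}\ 2\Delta=Z'-y$, and $\bigl||(Z'-y)-\Delta|-y\bigr|\leq|Z'-\Delta|<\varepsilon$ by the reverse triangle inequality, so Lipschitz continuity of $g_{\ell,j}$ bounds the integrand by $L\varepsilon$; off that event the integrand is at most $2G$, and since $y\leq\Delta-\varepsilon$ forces $\mathbb P(Z'\geq y)\geq\mathbb P(Z'\geq\Delta-\varepsilon)\geq1-\var(Z)/\varepsilon^2$ by Chebyshev, the conditional probability of that event is at most $\tfrac{\var(Z)/\varepsilon^2}{1-\var(Z)/\varepsilon^2}$ --- this is the computation in the proof of Lemma~\ref{lem: cv evaluates g} with $t_n$ replaced by $y$, and since $W_n\in[0,\Delta-\varepsilon]$ it gives $|\mathbb E[h(W_n)\mid Z>t_n]-\mathbb E[g_{\ell,j}(W_n)\mid Z>t_n]|$ small. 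For the second, $|\mathbb E[g_{\ell,j}(W_n)\mid Z>t_n]-g_{\ell,j}(\Delta-t_n)|\leq L\varepsilon+2G\cdot\tfrac{\var(Z)/\varepsilon^2}{1-\var(Z)/\varepsilon^2}$: on $\{|Z-\Delta|<\varepsilon\}$ one checks $|W_n-(\Delta-t_n)|\leq\varepsilon$ --- if $t_n\geq2\varepsilon$ then $Z-t_n<\Delta+\varepsilon-t_n\leq\Delta-\varepsilon$, so $W_n=Z-t_n$ and $|W_n-(\Delta-t_n)|=|Z-\Delta|$; if $t_n<2\varepsilon$ (the case $n=1$, $t_1=0$) then $Z>\Delta-\varepsilon$, so $W_n=\Delta-\varepsilon$ and $|W_n-(\Delta-t_n)|=\varepsilon$ --- and the rest is again Lipschitz continuity plus Chebyshev, using $\mathbb P(Z>t_n)\geq1-\var(Z)/\varepsilon^2$. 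Adding the two estimates, using $\bs e_n\tr{}\bs D\bs V_{\ell,j}f=\mathbb E[h(W_n)\mid Z>t_n]$, and setting $\varepsilon=\var(Z)^{1/3}$ gives $|r^4_{\ell,j,n}|\leq(2L+4G)\tfrac{\var(Z)^{1/3}}{1-\var(Z)^{1/3}}$, so $M=2L+4G$ works.

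I expect the \emph{main obstacle} to be the second structural identity: spotting that the two-piece definition (\ref{eqn: D instead}) of $\bs D$ is precisely the mixing matrix of the \emph{truncated} residual-life variable $W_n=\min\{Z-t_n,\Delta-\varepsilon\}$, so that $\bs e_n\tr{}\bs D=\mathbb E[\bs\alpha e^{\bs SW_n}/(\bs\alpha e^{\bs SW_n}\bs e)\mid Z>t_n]$. Everything after that is Chebyshev bookkeeping, but some care is needed for $n=1$: there $t_1=0$ and the concentration point $\Delta-t_1=\Delta$ sits exactly on the truncation boundary $\Delta-\varepsilon$, so the leading contribution comes from the atom of $W_1$ at $\Delta-\varepsilon$ rather than from the absolutely continuous part, and one must also verify that the two error bounds above are genuinely uniform over $y\in[0,\Delta-\varepsilon]$ and over $n$.
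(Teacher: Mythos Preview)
Your proof is correct, and it takes a genuinely different --- and cleaner --- route than the paper's.

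The paper works directly with the two pieces of the definition~(\ref{eqn: D instead}) of \(\bs D\). It applies the triangle inequality to split \(\bs e_n\tr{}\bs D\bs V_{\ell,j}f - g_{\ell,j}(\Delta-t_n)\) into the contribution from the \(\int_0^{\Delta-\varepsilon}\) piece (matched with \(g_{\ell,j}(\Delta-t_n)1(n>1)\)) and the tail piece \(\int_{\Delta-\varepsilon}^\infty\) (matched with \(g_{\ell,j}(\Delta-t_n)1(n=1)\)), and then handles each of these with an explicit case split on \(n=1\) versus \(n\geq 2\). Inside the main piece it invokes the analogue of Lemma~\ref{lem: cv evaluates g} pointwise in \(x\) (your \(h(x)\approx g_{\ell,j}(x)\)) and then does a further concentration split of the \(x\)-integral around \(\Delta-t_n\). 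The argument is correct but produces several nested sub-cases.

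Your key observation --- that the two-piece formula~(\ref{eqn: D instead}) is exactly \(\bs e_n\tr{}\bs D=\mathbb E\bigl[\bs\alpha e^{\bs SW_n}/(\bs\alpha e^{\bs SW_n}\bs e)\,\big|\,Z>t_n\bigr]\) for the \emph{truncated} residual \(W_n=\min\{Z-t_n,\Delta-\varepsilon\}\) --- collapses the two pieces into a single conditional expectation \(\mathbb E[h(W_n)\mid Z>t_n]\). The error then decomposes once into (i) the uniform bound \(\sup_{y\in[0,\Delta-\varepsilon]}|h(y)-g_{\ell,j}(y)|\), which is exactly the content of the paper's intermediate identity but stated uniformly, and (ii) the concentration of \(W_n\) near \(\Delta-t_n\). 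The awkward \(n=1\) case is absorbed automatically: on \(\{|Z-\Delta|<\varepsilon\}\) the mass sits on the atom of \(W_1\) at \(\Delta-\varepsilon\), which is already \(\varepsilon\)-close to \(\Delta-t_1=\Delta\), so no separate branch is needed. As a bonus, your organisation gives an explicit constant \(M=2L+4G\), whereas the paper leaves \(M\) implicit.
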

\begin{proof}
    Using the definition of \(\bs D\),
    \begin{align}
        &\left|\bs e_n\tr{} \bs D\bs V_{\ell,j}f - g_{\ell,j}(\Delta-t_n)\right|  \nonumber
        \\&= \left|\bs e_n\tr{} \left[\int_{x=0}^{\Delta-\varepsilon}e^{\bs Sx}\bs s\cfrac{\bs \alpha e^{\bs Sx}}{\bs\alpha e^{\bs S x}\bs e}\wrt x + \int_{x=\Delta-\varepsilon}^\infty e^{\bs Sx}\bs s \cfrac{\bs \alpha e^{\bs S(\Delta-\varepsilon)}}{\bs\alpha e^{\bs S (\Delta-\varepsilon)}\bs e} \wrt x\right]\bs V_{\ell,j}f - g_{\ell,j}(\Delta-t_n)\right|  \nonumber
        \\&\leq \left|\bs e_n\tr{} \int_{x=0}^{\Delta-\varepsilon}e^{\bs Sx}\bs s\cfrac{\bs \alpha e^{\bs Sx}}{\bs\alpha e^{\bs S x}\bs e}\wrt x \bs V_{\ell,j}f - g_{\ell,j}(\Delta-t_n)1(n>1)\right| \nonumber
        \\& \quad{} 
        + \left|\bs e_n\tr{}\int_{x=\Delta-\varepsilon}^\infty e^{\bs Sx}\bs s\cfrac{\bs \alpha e^{\bs S(\Delta-\varepsilon)}}{\bs\alpha e^{\bs S (\Delta-\varepsilon)}\bs e} \wrt x\bs V_{\ell,j}f - g_{\ell,j}(\Delta-t_n)1(n=1)\right|,\label{eqn: bnd this 2}
    \end{align}
    by the triangle inequality. The second term in (\ref{eqn: bnd this 2}) is equal to 
    \begin{align}
        \left|\cfrac{\mathbb P(Z>t_n+\Delta-\varepsilon)}{\mathbb P (Z>t_n)}\cfrac{\bs \alpha e^{\bs S(\Delta-\varepsilon)}}{\bs\alpha e^{\bs S (\Delta-\varepsilon)}\bs e} \bs V_{\ell,j}f - g_{\ell,j}(\Delta-t_n)1(n=1)\right|.\label{eqn: bnd this 1}
    \end{align}
    Since we chose \(t_n<\Delta-\varepsilon\), then by Chebyshev's inequality, 
    \(\mathbb P(Z>t_n) \geq 1-\var(Z)/\varepsilon^2\). Also, by Chebyshev's inequality, since \(t_n>2\varepsilon\) for \(n=2,...,p\), then \(\mathbb P(Z>t_n+\Delta-\varepsilon)\leq \mathbb P(Z>\Delta+\varepsilon)\leq \var(Z)/\varepsilon^2\). Hence, for \(n\geq 2\),  (\ref{eqn: bnd this 1}) is less than or equal to 
    \begin{align}
        \left|\cfrac{\var(Z)/\varepsilon^2}{1-\var(Z)/\varepsilon^2}\cfrac{\bs \alpha e^{\bs S(\Delta-\varepsilon)}}{\bs\alpha e^{\bs S (\Delta-\varepsilon)}\bs e} \bs V_{\ell,j}f\right|\leq \cfrac{\var(Z)/\varepsilon^2}{1-\var(Z)/\varepsilon^2}G,
    \end{align}
    since \(\bs V_{\ell,j}\) is a bounded operator, \(f\) a bounded function.

    Now, replacing \(t_n\) by \(x\leq \Delta-\varepsilon\) in the proof of Lemma~\ref{lem: cv evaluates g} shows that 
    \begin{align}
        \displaystyle\cfrac{\bs \alpha e^{\bs Sx}}{\bs\alpha e^{\bs S x}\bs e}\bs V_{\ell,j}f = g_{\ell,j}(x) + r_{\ell,j,x}^2, \label{eqn: cv evaluates g}
    \end{align}
    where 
    \(r_{\ell,j,x}^2 \leq \cfrac{\var(Z)^{1/3}(2G+L)}{1-\var(Z)^{1/3}}\). Hence, \(\cfrac{\bs \alpha e^{\bs S(\Delta-\varepsilon)}}{\bs\alpha e^{\bs S (\Delta-\varepsilon)}\bs e} \bs V_{\ell,j}f - g_{\ell,j}(\Delta-\varepsilon)=r^1_{\ell,j,\Delta-\varepsilon}\). Therefore, for \(n=1\), the second term in (\ref{eqn: bnd this 2}) is
    \begin{align*}
        &\left|\bs e_1\tr{}\int_{x=\Delta-\varepsilon}^\infty e^{\bs Sx}\bs s\cfrac{\bs \alpha e^{\bs S(\Delta-\varepsilon)}}{\bs\alpha e^{\bs S (\Delta-\varepsilon)}\bs e} \wrt x\bs V_{\ell,j}f - g_{\ell,j}(\Delta-\varepsilon) + g_{\ell,j}(\Delta-\varepsilon) - g_{\ell,j}(\Delta)\right| 
        \\&= \left|\bs e_1\tr{}\int_{x=\Delta-\varepsilon}^\infty e^{\bs Sx}\bs s\wrt xr^1_{\ell,j,\Delta-\varepsilon} + g_{\ell,j}(\Delta-\varepsilon) - g_{\ell,j}(\Delta)\right| 
        \\& \leq |r_{\ell,j,\Delta-\varepsilon}^1| + L\varepsilon,
    \end{align*}
    since \(\bs e_1\tr{}\int_{x=\Delta-\varepsilon}^\infty e^{\bs Sx}\bs s\leq 1\) and by the Lipschitz continuity of \(g\) and the triangle inequality.
    
    By (\ref{eqn: cv evaluates g}), the first term in (\ref{eqn: bnd this 2}) is
    \begin{align}
        &\left|\bs e_n\tr{} \int_{x=0}^{\Delta-\varepsilon}e^{\bs Sx}\bs s\left(g_{\ell,j}(x) + r_{\ell,j,x}^2 \right)\wrt x - g_{\ell,j}(\Delta-t_n)1(n>1)\right|\nonumber
        \\&\leq \left|\bs e_n\tr{} \int_{x=0}^{\Delta-\varepsilon}e^{\bs Sx}\bs sg_{\ell,j}(x)\wrt x - g_{\ell,j}(\Delta-t_n)1(n>1)\right| + \left|\bs e_n\tr{} \int_{x=0}^{\Delta-\varepsilon}e^{\bs Sx}\bs sr_{\ell,j,x}^2 \wrt x\right|.\label{eqn: bnd this 3}
    \end{align}
    The second term in (\ref{eqn: bnd this 3}) less than or equal to \(\cfrac{\var(Z)^{1/3}(2G+L)}{1-\var(Z)^{1/3}}\) since \(|r_{\ell,j,x}^2|\) is bounded independently of \(x\) by \(\cfrac{\var(Z)^{1/3}(2G+L)}{1-\var(Z)^{1/3}}\) and \(\bs e_n\tr{} \int_{x=0}^{\Delta-\varepsilon}e^{\bs Sx}\bs s=\mathbb P(Z-t_n<\Delta-\varepsilon\mid Z>t_n)\leq 1\). For \(n=1\), the first term in (\ref{eqn: bnd this 3}) is 
    \[
        \left|\bs e_1\tr{} \int_{x=0}^{\Delta-\varepsilon}e^{\bs Sx}\bs sg_{\ell,j}(x)\wrt x \right|\leq G\mathbb P(Z<\Delta-\varepsilon)\leq G\cfrac{\var(Z)}{\varepsilon^2},
    \]
    by Chebyshev's inequality. For \(n>1\), the first term in (\ref{eqn: bnd this 3}) is 
    \begin{align}
        &\left|\bs e_n\tr{} \int_{x=0}^{\Delta-\varepsilon}e^{\bs Sx}\bs sg_{\ell,j}(x)\wrt x - g_{\ell,j}(\Delta-t_n)\right| \nonumber
        \\&= \left|\bs e_n\tr{} \int_{\substack{x\in [0,\Delta-\varepsilon]\\x\notin (\Delta-t_n-\varepsilon,\Delta-t_n+\varepsilon)}}e^{\bs Sx}\bs sg_{\ell,j}(x)\wrt x \right| 
        + \left|\bs e_n\tr{} \int_{x=\Delta-t_n-\varepsilon}^{\Delta-t_n+\varepsilon}e^{\bs Sx}\bs sg_{\ell,j}(x)\wrt x - g_{\ell,j}(\Delta-t_n)\right|. \label{eqn: bnd this 4}
    \end{align}
    Since \(g_{\ell,j}\) is bounded, then the first term in (\ref{eqn: bnd this 4}) is less than or equal to 
    \[
        G\mathbb P(Z\notin(\Delta-\varepsilon,\Delta+\varepsilon)\mid Z>t_n)\leq G\cfrac{\var(Z)/\varepsilon^2}{1-\var(Z)/\varepsilon^2}.
    \]
    Since \(g_{\ell,j}\) is Lipschitz continuous then 
    \begin{align*}
        \bs e_n\tr{} \int_{x=\Delta-t_n-\varepsilon}^{\Delta-t_n+\varepsilon}e^{\bs Sx}\bs sg_{\ell,j}(x)\wrt x = \bs e_n\tr{} \int_{x=\Delta-t_n-\varepsilon}^{\Delta-t_n+\varepsilon}e^{\bs Sx}\bs s\left(g_{\ell,j}(\Delta-t_n)+r^3_{\ell,j,n}(x)\right)\wrt x,
    \end{align*}
    where \(r^3_{\ell,j,n}(x)\) is also Lipschitz continuous with the same constant \(L\). Now, since \(r^3_{\ell,j,n}(x)\) is Lipschitz continuous, then \(|r^3_{\ell,j,n}(x)-r^3_{\ell,j,n}(\Delta-\varepsilon)|\leq L\varepsilon\) for all \(x\in[\Delta-\varepsilon,\Delta+\varepsilon]\), so 
    \[-\bs e_n\tr{} \int_{x=\Delta-t_n-\varepsilon}^{\Delta-t_n+\varepsilon}e^{\bs Sx}\bs sL\varepsilon\wrt x\leq \bs e_n\tr{}\int_{x=\Delta-t_n-\varepsilon}^{\Delta-t_n+\varepsilon} e^{\bs Sx}\bs sr^3_{\ell,j,n}(x)\wrt x\leq\bs e_n\tr{} \int_{x=\Delta-t_n-\varepsilon}^{\Delta-t_n+\varepsilon}e^{\bs Sx}\bs sL\varepsilon\wrt x.\] 
    Also, 
    \begin{align*}
        &\left|\bs e_n\tr{} \int_{x=\Delta-t_n-\varepsilon}^{\Delta-t_n+\varepsilon}e^{\bs Sx}\bs s\wrt xg_{\ell,j}(\Delta-t_n) - g(\Delta-t_n)\right|
        \\&=\left|\mathbb P(Z\in(\Delta-\varepsilon,\Delta+\varepsilon)\mid Z>t_n) g_{\ell,j}(\Delta-t_n) - g(\Delta-t_n)\right|
        \\&=\left|\mathbb P(Z\notin(\Delta-\varepsilon,\Delta+\varepsilon)\mid Z>t_n) g_{\ell,j}(\Delta-t_n)\right|
        \\&\leq\cfrac{\var(Z)/\varepsilon^2}{1-\var(Z)/\varepsilon^2}G,
    \end{align*}
    by Chebyshev's inequality and the boundedness of \(g_{\ell,j}\). So the second term in (\ref{eqn: bnd this 4}) is 
    \begin{align*}
        &\left|\bs e_n\tr{} \int_{x=\Delta-t_n-\varepsilon}^{\Delta-t_n+\varepsilon}e^{\bs Sx}\bs sg_{\ell,j}(x)\wrt x - g_{\ell,j}(\Delta-t_n)\right|
        \\&= \left|\bs e_n\tr{} \int_{x=\Delta-t_n-\varepsilon}^{\Delta-t_n+\varepsilon}e^{\bs Sx}\bs s\left(g_{\ell,j}(\Delta-t_n)+r^3_{\ell,j,n}(x)\right)\wrt x - g_{\ell,j}(\Delta-t_n)\right|
        \\&\leq \left|\bs e_n\tr{} \int_{x=\Delta-t_n-\varepsilon}^{\Delta-t_n+\varepsilon}e^{\bs Sx}\bs sg_{\ell,j}(\Delta-t_n)\wrt x - g_{\ell,j}(\Delta-t_n)\right|
        + \left|\bs e_n\tr{} \int_{x=\Delta-t_n-\varepsilon}^{\Delta-t_n+\varepsilon}e^{\bs Sx}\bs sr^3_{\ell,j,n}(x)\wrt x\right|
        \\&\leq \cfrac{\var(Z)/\varepsilon^2}{1-\var(Z)/\varepsilon^2}G + L\varepsilon.
    \end{align*}
    Now choose \(\varepsilon=\var(Z)^{1/3}\) and assemble the established bounds to get the result.
    \end{proof}

\end{document}